\documentclass{mcrf}
\usepackage{amsmath}
\usepackage{paralist}
\usepackage{graphics} 	
\usepackage{epsfig}  		
\usepackage[colorlinks=true]{hyperref}
\hypersetup{urlcolor=blue, citecolor=red}

\textheight=8.2 true in
\textwidth=5.0 true in
\topmargin 30pt
\setcounter{page}{1}



\newcommand{\mat}[1]{\mbox{\boldmath{$#1$}}}
\newtheorem{theorem}{Theorem}[section]

\newtheorem{lemma}[theorem]{Lemma}
\newtheorem{proposition}{Proposition}

\theoremstyle{definition}

\newcommand{\jjntQ}{\jnt\!\!\!\!\jnt_{Q}}
\newcommand{\jjntq}{\jnt\!\!\!\!\jnt_{\mathcal{O}\times(0,T)}}

\def\dis{\displaystyle}
\def\Om{\Omega}
\def\om{\omega}
\def\yvec{\mathbf{y}}
\def\nvec{\mathbf{n}}
\def\xvec{\mathbf{x}}
\def\hvec{\mathbf{h}}
\def\evec{\mathbf{e}}
\def\vvec{\mathbf{v}}
\def\uvec{\mathbf{u}}
\def\varphii{\mat{\varphi}}
\def\Fvec{\mathbf{F}}
\def\0vec{\mathbf{0}}
\def\Hvec{\mathbf{H}}
\def\Evec{\mathbf{E}}
\def\Vvec{\mathbf{V}}
\def\Lvec{\mathbf{L}}
\def\Svec{\mathbf{S}}

\def\Gvec{\mathbf{G}}
\def\Wvec{\mathbf{W}}
\def\Cvec{\mathbf{C}}

\newcommand{\jnt}{\dis\int}

\newcommand{\N}{\mbox{$I \kern -4pt N$}}

\newcommand{\Q}{\mbox{$Q \kern -8pt I$}}
\newcommand{\C}{\mbox{$C \kern -8pt I$}}

\title[Controlling Boussinesq-like systems with few controls]
	{On the control of some coupled systems of the Boussinesq kind with few controls}

\author[Enrique Fern\'andez-Cara and Diego A. Souza]{}

\subjclass{Primary: 35B37, 93B05; Secondary: 35Q35.}
\keywords{Navier-Stokes and Boussinesq-like systems, controllability, control reduction.}

\email{cara@us.es}
\email{diego@mat.ufpb.br}

\thanks{	The first author is supported by grants MTM2006-07932 and~MTM2010-15592 (DGI-MICINN, Spain).
        		The second author is supported by CAPES and CNPq Grant 620108/2008-8 (Brazil) and~MTM2010-15592 (DGI-MICINN, Spain).}

\begin{document}
\maketitle

\centerline{\scshape Enrique Fern\'andez-Cara}
\medskip
{\footnotesize
\centerline{Dpto.\ EDAN, University of Sevilla}
\centerline{Aptdo.~1160, 41080~Sevilla, Spain}
} 

\medskip

\centerline{\scshape Diego A. Souza}
\medskip
{\footnotesize
\centerline{Dpto.\ de Matem\'atica, Universidade Federal da Para\'iba}
\centerline{58051-900,~Jo\~ao Pessoa, Brazil}
}

\bigskip


\begin{abstract}
	This paper is devoted to prove the local exact controllability to the trajectories for a coupled system, of the 
	Boussinesq kind, with a reduced number of controls. In the state system, the unknowns are the velocity field and 
	pressure of the fluid $(\yvec,p)$, the temperature $\theta$ and an additional variable $c$ that can be viewed as 
	the concentration of a contaminant solute. We prove several results, that essentially show that it is sufficient 
	to act locally in space on the equations satisfied by $\theta$ and $c$.
\end{abstract}


\section{Introduction}

	Let $\Om\subset\mathbb{R}^N$ be a bounded connected open set whose boundary $\partial\Om$ is regular enough 
	(for instance of class $C^2$, $N=2$ or $N=3$). Let $\mathcal{O}\subset\Om$ be a (small) nonempty open subset 
	and assume that $T>0$. We will use the notation $Q=\Om\times(0,T)$ and $\Sigma=\partial\Om\times(0,T)$ and we 
	will denote by $\nvec=\nvec(\xvec)$ the outward unit normal to $\Om$ at any point $\xvec\in\partial\Om$.

	In the sequel, we will denote by $C$, $C_1$, $C_2$, $\dots$ various positive constants (usually depending on 
	$\Om$, $\mathcal{O}$ and $T$).

	We will be concerned with the following controlled system
\begin{equation}\label{eqsisacol}
	\left\{
		\begin{array}{lcl}
			\noalign{\smallskip} \dis
			\yvec_t - \Delta \yvec + (\yvec\cdot\nabla)\yvec + \nabla p = \vvec1_\mathcal{O} + \Fvec(\theta,c)	
			& \text{in}&	Q,		\\
			\noalign{\smallskip} \dis
			\nabla \cdot \yvec = 0                                                                                   						
			& \text{in}&     	Q,		\\
			\noalign{\smallskip} \dis
			\theta_t - \Delta \theta + \yvec \cdot \nabla\theta =  w_11_\mathcal{O} + f_1(\theta,c)                  		
			& \text{in}&     	Q,     		\\
			\noalign{\smallskip} \dis
			c_t - \Delta c +\yvec \cdot \nabla c = w_21_\mathcal{O} + f_2(\theta,c)                                  		
			& \text{in}&     	Q,     		\\
			\noalign{\smallskip} \dis
			\yvec = \0vec,~ \theta = c = 0                                                                           					
			& \text{on}&    	\Sigma, 	\\
     			\noalign{\smallskip} \dis
			\yvec(\cdot,0) = \yvec_0 , ~\theta(\cdot,0)=\theta_0, ~c(\cdot,0)=c_0                                                     				
			& \text{in}&    	\Omega,
		\end{array}
	\right.
\end{equation}
	where $\vvec = \vvec(\xvec, t)$, $w_1= w_1(\xvec, t)$ and $w_2 = w_2(\xvec, t)$ stand for the control functions. 
	They are assumed to act on the (small) subset $\mathcal{O}$ during the whole time interval $(0,T)$. The symbol 
	$1_\mathcal{O}$ stands for the characteristic function of $\mathcal{O}$. 
	
	It will be assumed that the functions $\Fvec = (F_1,\dots,F_N)$, $f_1$ and $f_2$ satisfy:
\begin{equation}\label{general-hyp-f}
	\left\{
		\begin{array}{l}
			F_i, f_1, f_2\in C^1(\mathbb{R}^2),~\hbox{with}~\nabla F_i,
			~\nabla f_1,~\nabla f_2\in \mathbf{L}^\infty(\mathbb{R}^2) \hbox{ and }  \\
			F_i(0,0) = f_1(0,0) = f_2(0,0) = 0~(1\leq i\leq N).
		\end{array}
	\right.
\end{equation}

	In~\eqref{eqsisacol}, $\yvec$ and $p$ can be, respectively, interpreted as the velocity field and the pressure of a fluid.
	The function $\theta$ (resp. $c$) can be viewed as the temperature of the fluid (resp. the concentration of a contaminant solute).
	On the other hand, $\vvec$, $w_1$ and $w_2$ must be regarded as source terms, locally supported in space, respectively for 
	the PDEs satisfied by $(\yvec,p)$, $\theta$ and $c$.

	From the viewpoint of control theory, $(\vvec,w_1,w_2)$ is the control and $(\yvec,p,\theta,c)$ is the state.
	In the problems considered in this paper, the main goal will always be related to choose $(\vvec,w_1,w_2)$ such that 
	$(\yvec,p,\theta,c)$ satisfies a desired property at $t=T$.

	More precisely, we will present some results that show that the system \eqref{eqsisacol} can be controlled, at least locally, 
	with only $N$ scalar controls in $L^2(\mathcal{O}\times(0,T))$. We will also see that, when $N=3$, \eqref{eqsisacol} can be 
	controlled, at least under some geometrical assumptions, with only $2$ (i.e., $N-1$) scalar controls.
	
	Thus, let us introduce the spaces $\Hvec$, $\Evec$ and $\Vvec$, with
\[
	\begin{array}{ll}
		&\Hvec = \{\,\varphi \in \Lvec^2(\Om) : \nabla \cdot \varphi = 0~\hbox{in}~\Om~\hbox{and}
		~\varphi\cdot\nvec = 0~\hbox{on}~\partial \Om\,\},                                               				\\ 
		\noalign{\smallskip}\dis
		&\Vvec = \{\,\varphi\in\Hvec^1_0(\Om) : \nabla \cdot \varphi = 0~\hbox{in}~\Om\,\},		\\
		\noalign{\smallskip}\dis 
		&\Evec = 
			\left\{
			\begin{array}{lcl}
				\Hvec,  				&\hbox{if}& N = 2,                                                                         	\\
				\Lvec^4(\Om) \cap \Hvec , 	&\hbox{if}& N = 3
			\end{array}
			\right.
	\end{array}
\]
	and let us fix a {\it trajectory} $(\overline{\yvec},\overline{p},\overline{\theta},\overline{c})$, that is, 
	a sufficiently regular solution to the related noncontrolled system:
\begin{equation}\label{trajectory}
	\left\{
		\begin{array}{lcl}
			\overline{\yvec}_t - \Delta \overline{\yvec} + (\overline{\yvec} \cdot \nabla) \overline{\yvec}
			+ \nabla \overline{p} = \Fvec(\overline{\theta},\overline{c})				&\hbox{in}	&	Q,		\\
			\noalign{\smallskip}\dis
			\nabla \cdot \overline{\yvec} = 0                                  						&\hbox{in}	& 	Q,		\\
			\noalign{\smallskip}\dis
			\overline{\theta}_t - \Delta \overline{\theta} +\overline{\yvec} \cdot \nabla\overline{\theta}
			= f_1(\overline{\theta},\overline{c})                            						&\hbox{in}	& 	Q,		\\
			\noalign{\smallskip}\dis
			\overline{c}_t - \Delta \overline{c} +\overline{\yvec} \cdot \nabla \overline{c} 
			= f_2(\overline{\theta},\overline{c})                                                                 	 	&\hbox{in}	& 	Q,		\\
			\noalign{\smallskip}\dis
			\overline{\yvec} = \0vec, ~\overline{\theta}=\overline{c}=0                               	&\hbox{on}&	\Sigma,	\\
			\noalign{\smallskip}\dis
			\overline{\yvec}(\cdot,0) = \overline{\yvec}_0 , 
			~\overline{\theta}(\cdot,0) = \overline{\theta}_0, ~\overline{c}(\cdot,0) = \overline{c}_0	&\hbox{in}	&	\Om.
		\end{array}
	\right.
\end{equation}
	It will be assumed that
\begin{equation}\label{trajec}
	\overline{\yvec}_i,~\overline{\theta},~\overline{c}\in L^\infty(Q)~\hbox{and}~\overline{\yvec}_{i,t},
	~\overline{\theta}_t,\overline{c} _t\in L^2(0,T;L^\kappa(\Om)),~(1\leq i\leq N)
\end{equation}
	with
\begin{equation}\label{kappa}	
		\left\{
			\begin{array}{lcl}
				\kappa >1 ,   &\hbox{if}& N = 2, \\
				\kappa >6/5 , &\hbox{if}& N = 3.
			\end{array}
		\right.
\end{equation}

	Notice that, if the initial data in \eqref{trajectory} satisfy appropriate regularity conditions and 
	$(\overline{\yvec},\overline{p},\overline{\theta},\overline{c})$ solves \eqref{trajectory} (for instance 
	in the usual weak sense) and $\overline{\yvec}_i$, $\overline{\theta}$, $\overline{c}\in L^{\infty}(Q)$, then 
	we have \eqref{trajec}. For example, if $\overline{\yvec}_0\in\Vvec$ and $\overline{\theta}_0$, 
	$\overline{c}_0 \in H_0^1(\Om)$, we actually have from the parabolic regularity theory that 
	$\overline{\yvec}_{i,t}$, $\overline{\theta}_t$, $\overline{c} _t \in L^2(Q)$.

	In our first main result, we will assume the following:
\begin{equation}\label{f-case-1}
	\begin{array}{l}
		\dis f_1\equiv f_2\equiv0~\hbox{and}~\Fvec(a_1,a_2)=a_1\evec_N+a_2\vec{\hvec},~\hbox{where:}	\\
		\noalign{\medskip}
		\dis~~\bullet~\hbox{$\evec_N$ is the $N$-th vector of the canonical basis of $\mathbb{R}^N$,}			\\
		\noalign{\smallskip}
		\dis~~\bullet~\hbox{$\vec{\hvec}$ is a vector of $\mathbb{R}^N$ such that $\{\evec_N,\vec{\hvec}\}$ is linearly independent.}
	\end{array}
\end{equation}

	Then, we have the following result:
\begin{theorem}\label{teo1}
	Assume that the assumptions \eqref{trajectory}--\eqref{f-case-1} are satisfied. Then there exists 
	$\delta > 0$ such that, whenever $(\yvec_0,\theta_0,c_0)\in\Evec\times L^2(\Om)\times L^2(\Om)$ and
\[
	\|(\yvec_0, \theta_0, c_0)-(\overline{\yvec}_0,\overline{\theta}_0,\overline{c}_0)\|\leq\delta,
\]
	we can find a $L^2$-control $(\vvec, w_1,w_2)$ with $v_i\equiv v_N\equiv0$ for some $1\leq i<N$ 
	and an associated state $ (\yvec,p,\theta,c)$ satisfying
\begin{equation}\label{contrage}
	\yvec(\cdot,T) = \overline{\yvec} (\cdot,T),~\theta(\cdot,T) = \overline{\theta}(\cdot,T)~\hbox{and}~c(\cdot,T) = \overline{c}(\cdot,T)\quad \hbox{in}\quad \Omega.
\end{equation}
\end{theorem}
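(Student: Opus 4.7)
\smallskip
\noindent\textbf{Proof plan.} The approach follows the by-now standard Fursikov--Imanuvilov scheme. Setting
$\uvec=\yvec-\overline{\yvec}$, $q=p-\overline{p}$, $\eta=\theta-\overline{\theta}$ and
$\gamma=c-\overline{c}$, condition~\eqref{contrage} becomes null-controllability at time $T$ of $(\uvec,\eta,\gamma)$
for a system whose linear part is obtained by freezing the advection at $\overline{\yvec}$ and the coupling at
$\Fvec(\eta,\gamma)=\eta\evec_N+\gamma\vec{\hvec}$, and whose nonlinear perturbation consists of the quadratic
terms $(\uvec\cdot\nabla)\uvec$, $\uvec\cdot\nabla\eta$, $\uvec\cdot\nabla\gamma$. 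The plan is therefore to prove
null-controllability (with a source term) for the associated linear system, subject to the constraint
$v_i\equiv v_N\equiv 0$, in weighted Sobolev spaces of Fursikov--Imanuvilov type; and then to pass to the
nonlinear problem by a local inversion argument in those spaces.

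For the linear step I would proceed by duality. Its adjoint has unknowns $(\varphii,\sigma,\psi,\chi)$ and,
backward in time, reads
\begin{equation*}
\left\{
\begin{array}{l}
-\varphii_t - \Delta\varphii - (\overline{\yvec}\cdot\nabla)\varphii + (\nabla\overline{\yvec})^T\varphii + \nabla\sigma
= -\psi\nabla\overline{\theta}-\chi\nabla\overline{c},\\
\nabla\cdot\varphii = 0,\\
-\psi_t - \Delta\psi - \overline{\yvec}\cdot\nabla\psi = \varphi_N,\\
-\chi_t - \Delta\chi - \overline{\yvec}\cdot\nabla\chi = \varphii\cdot\vec{\hvec},
\end{array}
\right.
\end{equation*}
with homogeneous Dirichlet boundary conditions and prescribed final data. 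The observations that the admissible
controls pair against are $\psi 1_{\mathcal{O}}$, $\chi 1_{\mathcal{O}}$ and $\varphi_j 1_{\mathcal{O}}$ for
$j\neq i,N$. The key remark is that the last two adjoint equations give, on $\mathcal{O}\times(0,T)$,
\[
\varphi_N = -\psi_t-\Delta\psi-\overline{\yvec}\cdot\nabla\psi,\qquad
\varphii\cdot\vec{\hvec} = -\chi_t-\Delta\chi-\overline{\yvec}\cdot\nabla\chi,
\]
so that two scalar combinations of $\varphii$ are recovered \emph{locally} on $\mathcal{O}$ as differential
expressions in $\psi$ and $\chi$. When $N=2$, the linear independence of $\{\evec_N,\vec{\hvec}\}$ already
yields the full vector $\varphii$ on $\mathcal{O}$. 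When $N=3$, that same linear independence forces
$(h_1,h_2)\neq \0vec$, so I would pick $i\in\{1,2\}$ with $h_i\neq 0$; then, combining the recovered
$\varphi_N$ and $\varphii\cdot\vec{\hvec}$ with the directly observed $\varphi_j$, $j\neq i,N$, determines
$\varphi_i$ on $\mathcal{O}$ as a linear combination of the same quantities.

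Plugging these expressions into a global Fursikov--Imanuvilov Carleman estimate for the Stokes-like system in
$(\varphii,\sigma)$ leads, on $\mathcal{O}\times(0,T)$, to local weighted integrals of
$\psi_t,\Delta\psi,\chi_t,\Delta\chi$; these are to be absorbed by combining with Carleman inequalities for the
scalar heat equations satisfied by $\psi$ and $\chi$, provided the weight functions are chosen in a cascade
fashion so that the weight used for $\psi$ and $\chi$ strictly dominates the one used for $\varphii$. The
resulting observability inequality yields, through HUM, a control
$(\vvec,w_1,w_2)\in L^2(\mathcal{O}\times(0,T))^{N+2}$ with $v_i\equiv v_N\equiv 0$ that drives the linear system
to rest, together with source-term variants displaying suitable decay near $t=T$. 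The nonlinear step then
consists in setting up a map between Fursikov--Imanuvilov weighted spaces whose derivative at the origin is the
linear controllability operator just built; the nonlinearities $(\uvec\cdot\nabla)\uvec$,
$\uvec\cdot\nabla\eta$, $\uvec\cdot\nabla\gamma$ are of class $C^1$ on those spaces under the regularity
\eqref{trajec}--\eqref{kappa} of the trajectory, and Liusternik's inverse mapping theorem then provides the
desired local exact controllability for initial data close to
$(\overline{\yvec}_0,\overline{\theta}_0,\overline{c}_0)$.

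The main obstacle is the Carleman cascade: the substitution of the unobserved components of $\varphii$ by
second-order derivatives of $\psi$ and $\chi$ produces local terms of order $2$ in those variables that are
\emph{not} controlled by standard Carleman estimates, and the weights appearing in the inequalities for the
Stokes system and for the $\psi,\chi$ equations must be tuned so that those higher-order local terms are
effectively absorbed. This weight-balancing, typical of Carleman estimates for coupled systems with few
observations, is the technically delicate heart of the argument.
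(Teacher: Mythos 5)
Your overall architecture coincides with the paper's: the same decomposition around the trajectory, the same adjoint system, the same linear-algebra observation that $\{\evec_N,\vec{\hvec},\evec_j\}$ spans $\mathbb{R}^N$ once $v_i\equiv v_N\equiv 0$ is imposed with $h_i\neq 0$, a weighted null-controllability result for the linearized system, and Liusternik's inverse mapping theorem to conclude. The difficulty you flag at the end, however, is not a technicality to be tuned away: as you have set it up, the argument does not close. Substituting $\varphi_N=-\psi_t-\Delta\psi-\overline{\yvec}\cdot\nabla\psi$ and $\varphii\cdot\vec{\hvec}=-\zeta_t-\Delta\zeta-\overline{\yvec}\cdot\nabla\zeta$ into the local observation term produces local integrals of $|\psi_t|^2+|\Delta\psi|^2$ and $|\zeta_t|^2+|\Delta\zeta|^2$ carrying the \emph{heavy} observation weight (of the type $s^{16}{\xi^*}^{16}e^{-8s\widehat{\alpha}+6s\alpha^*}$ in \eqref{10}), whereas the global Carleman functional only controls these second-order quantities with the much \emph{lighter} weight $s^{-1}\xi^{-1}e^{-2s\alpha}$. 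Since $-8\widehat{\alpha}+6\alpha^*>-2\alpha^*$, no choice of the cascade parameters makes the former absorbable by the latter; this is precisely why a "cascade of weights" in your sense cannot be arranged.

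The idea you are missing is to run the duality the other way. Write the local integral of $|\varphii\cdot\vec{\hvec}|^2$ as a pairing of $\beta\vartheta\,(\varphii\cdot\vec{\hvec})$ (with $\vartheta$ a cut-off supported in $\mathcal{O}$ and $\beta$ the local weight) against the right-hand side $-\zeta_t-\Delta\zeta-\overline{\yvec}\cdot\nabla\zeta-\widetilde{g}_2$ of the $\zeta$-equation, and then integrate by parts in $t$ and $x$ so that \emph{all} derivatives land on $\beta\vartheta\varphii$ rather than on $\zeta$. The resulting terms $\varphii_t$, $\Delta\varphii$, $\nabla\varphii$ appear with the light global weight and are absorbed into $\varepsilon K(\varphii,\psi,\zeta)$, while $\zeta$ only ever appears undifferentiated, locally, with a further degraded weight — which is harmless since $\zeta$ is an observed variable. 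The same is done for $\varphi_N$ via the $\psi$-equation. This yields the Carleman inequality with local terms only in $\varphi_j$ ($j\neq i,N$), $\psi$, $\zeta$, after which your linear and nonlinear steps go through as you describe (the paper builds the control by a penalized variational problem in weighted spaces rather than plain HUM, but that is a routine difference). Without this integration-by-parts reversal, the key lemma is not proved and the theorem does not follow.
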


	In our second result, we will consider more general (and maybe nonlinear) functions $\Fvec$. We will denote by 
	$\mathbf{G}(\theta, c)$ and $\mathbf{L}(\theta, c)$ the partial derivatives of $\Fvec$ with respect to $\theta$ and $c$:
$$
	\mathbf{G}(\theta, c) = {\partial \Fvec \over \partial \theta}(\theta, c)\quad\hbox{and}\quad\mathbf{L} (\theta, c)= {\partial \Fvec \over \partial c}(\theta, c).
$$
	The following will be assumed:
\begin{equation}\label{f-case-2}
	\begin{array}{l}
		\dis\hbox{There exists a non-empty open subset}~\mathcal{O}_*\subset\mathcal{O}~\hbox{such that}	\\
		\noalign{\smallskip}\dis 
		\mathbf{G}(\overline{\theta},\overline{c})~\hbox{and}~\mathbf{L}(\overline{\theta},\overline{c})
		~\hbox{are continuous and linearly independent in}~\mathcal{O}_* \times (0,T).
	\end{array}
\end{equation}

	Then, we get a generalization of Theorem \ref{teo1}:
\begin{theorem}\label{teo2}
	Assume that the assumptions \eqref{general-hyp-f}, \eqref{trajectory}--\eqref{kappa} and \eqref{f-case-2} are satisfied.
	Then there exists $\delta > 0$ such that, whenever $(\yvec_0,\theta_0,c_0)\in \Evec\times L^2(\Om)\times L^2(\Om)$ 
	and
$$
	\|(\yvec_0,\theta_0,c_0) - (\overline{\yvec}_0,\overline{\theta}_0,\overline{c}_0)\| \leq \delta,
$$
	we can find a $L^2$-control $(\vvec, w_1,w_2)$ with $v_i\equiv v_j\equiv0$ for some $i\neq j$ and an associated 
	state $ (\yvec,p,\theta,c)$ satisfying \eqref{contrage}.
\end{theorem}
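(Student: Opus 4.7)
The plan is the standard three-step reduction for local exact controllability to the trajectories: (i) set $(\mathbf{z},q,\vartheta,\gamma) = (\yvec-\overline{\yvec},\,p-\overline p,\,\theta-\overline\theta,\,c-\overline c)$ and reformulate \eqref{contrage} as a null controllability problem for the nonlinear system satisfied by $(\mathbf{z},\vartheta,\gamma)$; (ii) prove null controllability of its linearization around $(\overline{\yvec},\overline\theta,\overline c)$ with only the $N$ scalar controls $(v_k)_{k\neq i,j},w_1,w_2$; (iii) close the nonlinear problem via an inverse mapping theorem of Liusternik type in suitable weighted Sobolev spaces, exactly as in the proof of Theorem~\ref{teo1}. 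Under the regularity \eqref{trajec} and the $C^1$-boundedness \eqref{general-hyp-f} of $\Fvec,f_1,f_2$, the remainders created in step (iii) are controlled in weighted norms in the usual way, so the heart of the argument is step (ii).

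Writing $\Fvec(\theta,c) - \Fvec(\overline\theta,\overline c) = \Gvec(\overline\theta,\overline c)\vartheta + \Lvec(\overline\theta,\overline c)\gamma + (\text{h.o.t.})$, the linearization of \eqref{eqsisacol} is a Stokes equation for $\mathbf{z}$ driven by the buoyancy force $\Gvec(\overline\theta,\overline c)\vartheta + \Lvec(\overline\theta,\overline c)\gamma$, coupled to two convection-diffusion equations for $\vartheta,\gamma$ carrying the distributed controls $w_1 1_\mathcal{O}, w_2 1_\mathcal{O}$, and subject to the constraint $v_i\equiv v_j\equiv 0$. The indices $i\neq j$ are selected so that $\mathcal{O}_*$ witnesses the linear independence of $\Gvec(\overline\theta,\overline c)$ and $\Lvec(\overline\theta,\overline c)$. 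The underlying idea is that, through this linear independence on $\mathcal{O}_*$, the temperature and concentration controls generate, via the buoyancy coupling, two effective velocity forcings that replace the missing components $v_i, v_j$.

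Null controllability of the linearization reduces by duality to a global Carleman observability inequality for the adjoint system
\begin{equation*}
\left\{
\begin{array}{l}
-\varphii_t - \Delta\varphii - (\overline{\yvec}\cdot\nabla)\varphii + (\nabla\overline{\yvec})^t\varphii + \nabla\pi = \Gvec(\overline\theta,\overline c)\psi + \Lvec(\overline\theta,\overline c)\eta, \\
\nabla\cdot\varphii = 0, \\
-\psi_t - \Delta\psi - \overline{\yvec}\cdot\nabla\psi = \varphii\cdot\Gvec(\overline\theta,\overline c) + (\partial_\theta f_1)\psi + (\partial_\theta f_2)\eta, \\
-\eta_t - \Delta\eta - \overline{\yvec}\cdot\nabla\eta = \varphii\cdot\Lvec(\overline\theta,\overline c) + (\partial_c f_1)\psi + (\partial_c f_2)\eta,
\end{array}
\right.
\end{equation*}
with homogeneous Dirichlet boundary conditions and terminal data at $t=T$. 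The target is to bound a global weighted $L^2$-norm of $(\varphii,\psi,\eta)$ by local weighted integrals of $\psi,\eta$ on $\mathcal{O}\times(0,T)$ and of $\varphi_k$, $k\neq i,j$, on $\mathcal{O}\times(0,T)$. The natural route is to apply Carleman inequalities of Fursikov-Imanuvilov type for the Stokes operator and of Fern\'andez-Cara-Guerrero-Imanuvilov-Puel type for the two heat operators, with a common weight $e^{-s\alpha}\xi^m$ built from an auxiliary open set compactly contained in $\mathcal{O}_*$, then to absorb the coupling lower-order terms. The unwanted leftover is a local integral of the two missing components $\varphi_i,\varphi_j$: on the momentum equation restricted to a cutoff of $\mathcal{O}_*$, the linear independence of $\Gvec(\overline\theta,\overline c),\Lvec(\overline\theta,\overline c)$ allows $\varphi_i,\varphi_j$ to be expressed algebraically in terms of $\psi,\eta,\varphii_t,\Delta\varphii,(\overline{\yvec}\cdot\nabla)\varphii,\nabla\pi$ and the remaining $\varphi_k$; after integration by parts and a local pressure estimate on $\pi$ (modulo its mean on a ball), each of the resulting terms is dominated by the prescribed observations and by small global contributions that are absorbed once $s$ is large.

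The main obstacle will be this last elimination step. In contrast with Theorem~\ref{teo1}, where $\Fvec(\theta,c) = \theta\evec_N + c\vec{\hvec}$ has constant coefficients and the linear independence is global, here $\Gvec(\overline\theta,\overline c)$ and $\Lvec(\overline\theta,\overline c)$ are only continuous on $\mathcal{O}_*\times(0,T)$, so integrations by parts must be organized so as not to differentiate these coefficients. A finer construction of the Carleman weight (with cutoffs nested between $\mathcal{O}_*$ and $\mathcal{O}$ and compatible with the regularity available for $\Gvec,\Lvec$) and a careful accounting of $\varphii_t$ and of the pressure terms will be essential. Once this refined observability inequality is in place, null controllability of the linear system in the appropriate weighted spaces and, ultimately, local exact controllability for the nonlinear problem follow by the standard Liusternik inverse mapping argument.
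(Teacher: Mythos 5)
Your overall architecture (linearize around the trajectory, prove a Carleman/observability estimate for the adjoint with a reduced velocity observation, conclude by Liusternik's inverse mapping theorem) is exactly the paper's, and your heuristic that the buoyancy coupling lets $w_1,w_2$ replace the two missing velocity controls is correct. However, the step where you actually use hypothesis \eqref{f-case-2} is wrong. First, your adjoint system is mis-derived: dualizing the linearized primal system puts the buoyancy coupling $\overline{\Gvec}\cdot\varphii$ and $\overline{\Lvec}\cdot\varphii$ as \emph{sources in the scalar equations} for $\psi$ and $\zeta$ (and puts $\overline{\theta}\nabla\psi+\overline{c}\nabla\zeta$, coming from $\uvec\cdot\nabla\overline{\theta}$ and $\uvec\cdot\nabla\overline{c}$, into the momentum equation); the terms $\Gvec(\overline\theta,\overline c)\psi+\Lvec(\overline\theta,\overline c)\eta$ do \emph{not} appear on the right of the adjoint momentum equation, so you have duplicated the coupling and omitted the convection terms. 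Second, and more seriously, even granting your form of the momentum equation, linear independence of $\overline{\Gvec}$ and $\overline{\Lvec}$ would let you solve that vector identity for the \emph{scalars} $\psi,\eta$, not for the two velocity components $\varphi_i,\varphi_j$; there is no algebraic expression of $\varphi_i,\varphi_j$ in terms of $\psi,\eta,\varphii_t,\Delta\varphii,\nabla\pi$ and $\varphi_k$, so the elimination step as you describe it cannot be carried out.

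The mechanism the paper uses is different and is the one you need. Strengthen \eqref{f-case-2} to a uniform determinant condition: by continuity there exist $\mathbf{k}\in\mathbb{R}^3$, $a_0>0$, $\tau>0$ and $\omega\subset\subset\mathcal{O}_*$ with $\det[\,\overline{\Gvec}\,|\,\overline{\Lvec}\,|\,\mathbf{k}\,]\geq a_0$ on $\overline{\omega}\times[\tau,T-\tau]$; one takes zero controls on $[0,\tau]$ and controls on the remaining interval (this reduction is absent from your proposal). With, say, $\mathbf{k}=\evec_1$, the determinant condition gives the \emph{pointwise} bound $|\varphii|^2\leq C\bigl(|\overline{\Gvec}\cdot\varphii|^2+|\overline{\Lvec}\cdot\varphii|^2+|\varphi_1|^2\bigr)$ on the observation cylinder, so the local velocity observation is reduced to the single component $\varphi_1$ plus local integrals of $\overline{\Gvec}\cdot\varphii$ and $\overline{\Lvec}\cdot\varphii$. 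These last two are precisely the source terms of the $\psi$- and $\zeta$-equations of the (correct) adjoint, so, as in the final steps of Lemma \ref{carlemanteo1}, one writes $\overline{\Gvec}\cdot\varphii=-\psi_t-\Delta\psi-\overline{\yvec}\cdot\nabla\psi-\widetilde g_1-\overline g_1\psi-\overline g_2\zeta$ (and similarly for $\overline{\Lvec}\cdot\varphii$), multiplies by a cut-off times the weight, and integrates by parts to convert these local integrals into $\varepsilon K(\varphii,\psi,\zeta)$ plus local weighted integrals of $\psi,\zeta,\widetilde g_1,\widetilde g_2$. Your closing worry about not differentiating the merely continuous coefficients $\overline{\Gvec},\overline{\Lvec}$ is a legitimate one for this integration-by-parts step, but identifying a difficulty is not the same as supplying the argument; as written, the proposal does not contain a workable proof of the key Carleman estimate (the analogue of \eqref{carleman3}).
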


	In the three-dimensional case, we can improve Theorem \ref{teo1} if we add to the hypotheses an appropriate geometrical 
	assumption on $\mathcal{O}$. More precisely, let us assume that:
\begin{equation}\label{controldomain}
	\begin{array}{l}
	\hbox{there exist}~\xvec^0\in\partial\Om~\hbox{and}~a>0~\hbox{such that}~B_{a}(\xvec^0)
	\cap\partial\Om\subset\overline{\mathcal{O}}\cap\partial\Om,\\
	\noalign{\smallskip}\dis 
	\hbox{where $B_{a}(\xvec^0)$ is the ball centered at $\xvec^0$ of radius $a$.}
	\end{array}
\end{equation}

\
	
	\noindent Then the following holds:
\begin{theorem}\label{teo3}
	Assume that $N=3$, the assumptions in Theorem \ref{teo1} are satisfied, \eqref{controldomain} holds and
\begin{equation}\label{controldomain-bis}
	h_1n_2(\xvec^0)-h_2n_1(\xvec^0)\neq0.
\end{equation}
	Then, the conclusion of Theorem \ref{teo1} holds true with a $L^2$-control $(\vvec, w_1,w_2)$ such that 
	$\vvec\equiv\0vec$.
\end{theorem}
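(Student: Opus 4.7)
The plan is to follow the now-standard scheme for local exact controllability to trajectories. I would first use an inverse mapping argument analogous to the one employed in Theorems \ref{teo1} and \ref{teo2} in order to reduce the problem to proving the null controllability of the linearization of \eqref{eqsisacol} around $(\overline\yvec,\overline p,\overline\theta,\overline c)$, but now with the additional requirement that no control acts on the momentum equation, that is, $\vvec\equiv\0vec$. The crucial new step is thus to show that the linearized system is null-controllable using only the two scalar controls $w_1,w_2$, supported in $\mathcal{O}$ and acting on the equations for $\theta$ and $c$.

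By the Hilbert Uniqueness Method, this is equivalent to an observability inequality for the corresponding adjoint system, which reads schematically
\begin{equation*}
\left\{
\begin{array}{l}
-\varphii_t - \Delta\varphii - (\overline\yvec\cdot\nabla)\varphii + (D\overline\yvec)^{T}\varphii + \nabla\pi = \psi\,\nabla\overline\theta + \chi\,\nabla\overline c, \\
\nabla\cdot\varphii = 0, \\
-\psi_t - \Delta\psi - \overline\yvec\cdot\nabla\psi = \varphii\cdot\evec_N, \\
-\chi_t - \Delta\chi - \overline\yvec\cdot\nabla\chi = \varphii\cdot\vec{\hvec},
\end{array}
\right.
\end{equation*}
with $\varphii=\0vec$, $\psi=\chi=0$ on $\Sigma$ and prescribed data at $t=T$. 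The inequality to prove has the form
\begin{equation*}
\|\varphii(\cdot,0)\|^2 + \|\psi(\cdot,0)\|^2 + \|\chi(\cdot,0)\|^2 \leq C\,\jjntq\bigl(|\psi|^2 + |\chi|^2\bigr).
\end{equation*}

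The bulk of the work is thus a Carleman estimate whose only observed quantities are $\psi$ and $\chi$ on $\mathcal{O}\times(0,T)$. I would proceed in three stages. First, apply the classical heat Carleman inequality to $\psi$ and to $\chi$ with observation on a subdomain $\omega$ compactly contained in $\mathcal{O}$; this bounds the weighted norms of $\psi,\chi$ by weighted norms of $\varphii\cdot\evec_N,~\varphii\cdot\vec{\hvec}$ plus the local observation terms. Second, derive a Carleman estimate for the Stokes-like adjoint $(\varphii,\pi)$: the source $\psi\,\nabla\overline\theta + \chi\,\nabla\overline c$ is absorbed by the Carleman estimates just obtained, while the usual interior observation of $\varphii$ is replaced by a boundary one. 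This last step is legitimate thanks to the geometric assumption \eqref{controldomain}: since $B_a(\xvec^0)\cap\partial\Om\subset\overline{\mathcal{O}}\cap\partial\Om$, one can invoke a boundary Carleman inequality for the Stokes system in the spirit of Imanuvilov--Puel--Yamamoto and Coron--Guerrero, which trades an interior observation of $\varphii$ for a normal-derivative observation on $B_a(\xvec^0)\cap\partial\Om$. Third, combine everything: the equations for $\psi,\chi$ give local control over $\varphii\cdot\evec_N$ and $\varphii\cdot\vec{\hvec}$, while the boundary Carleman supplies information on $\partial_{\nvec}\varphii$ at $\xvec^0$. The hypothesis \eqref{controldomain-bis} is precisely the non-degeneracy condition which, combined with $\nabla\cdot\varphii=0$ and $\varphii|_{\partial\Om}=\0vec$, ensures that these three pieces of data determine $\varphii$ coercively near $\xvec^0$ and allow the component transverse to $\mathrm{span}\{\evec_N,\vec{\hvec}\}$ to be recovered.

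The main obstacle is precisely this reduced Carleman inequality for $\varphii$: with no control on the velocity, only two scalar linear combinations of $\varphii$ are directly observable through $\psi$ and $\chi$, which in dimension three falls one short of determining the full vector. The divergence-free constraint and the Dirichlet condition compensate for the missing direction, but only when the boundary patch $B_a(\xvec^0)\cap\partial\Om$ is available and the transversality \eqref{controldomain-bis} --- which can be read as saying that the $\evec_N$-component of $\vec{\hvec}\times\nvec(\xvec^0)$ is nonzero, so that $\{\evec_N,\vec{\hvec},\nvec(\xvec^0)\}$ is linearly independent --- is satisfied; I expect its proof to rely on a careful hidden-regularity argument for the Stokes system coupled with the Imanuvilov--Puel--Yamamoto boundary Carleman weight. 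Once the observability inequality is established, a standard duality argument gives null controllability of the linearization with $\vvec\equiv\0vec$, and the inverse mapping theorem (exactly as in the proofs of Theorems \ref{teo1} and \ref{teo2}) transfers the result to the nonlinear system \eqref{eqsisacol}, yielding the conclusion of Theorem \ref{teo3}.
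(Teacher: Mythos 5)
Your overall architecture (reduce to null controllability of the linearization with $\vvec\equiv\0vec$, prove a Carleman/observability inequality for the adjoint system whose only local terms involve $\psi$ and $\zeta$, then conclude by duality and the inverse mapping theorem) matches the paper, and your reading of \eqref{controldomain-bis} as the linear independence of $\{\evec_3,\vec{\hvec},\nvec(\xvec^0)\}$ is correct. However, the central technical step of your plan --- invoking a boundary Carleman inequality for the Stokes adjoint that trades the interior observation of $\varphii$ for an observation of $\partial_\nvec\varphii$ on the patch $B_a(\xvec^0)\cap\partial\Om$ --- is both unsubstantiated and, as stated, leaves the argument incomplete. First, a Carleman estimate for the Stokes system observed through the normal derivative on an arbitrary small boundary patch is not an available off-the-shelf tool (the cited Coron--Guerrero result is an \emph{interior} $N-1$ component estimate, and boundary observability of Stokes from a small patch is a delicate matter involving the pressure trace). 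Second, even granting such an estimate, you never explain how the resulting boundary term $\iint |\partial_\nvec\varphii|^2\,dS\,dt$ would itself be absorbed or bounded by local interior integrals of $\psi$ and $\zeta$; asserting that the three pieces of data ``determine $\varphii$ coercively near $\xvec^0$'' is not an estimate, and passing from interior $L^2$ control of $\varphii\cdot\evec_3$ and $\varphii\cdot\vec{\hvec}$ to weighted boundary control of their normal derivatives is precisely the kind of hidden-regularity step that cannot be waved through.

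The paper's Lemma \ref{carlemanteo3} avoids boundary observability entirely and works in the interior. The divergence-free condition yields the pointwise identity \eqref{chave},
\begin{equation*}
(-h_2,h_1,0)\cdot\nabla\varphi_2=-\partial_1(\hvec\cdot\varphii)+(h_3,0,-h_1)\cdot\nabla\varphi_3 ,
\end{equation*}
and the role of \eqref{controldomain}--\eqref{controldomain-bis} is to guarantee that the direction $(-h_2,h_1,0)$ is transverse to $\partial\Om$ near $\xvec^0$ (indeed $(-h_2,h_1,0)\cdot\nvec(\xvec^0)=h_1n_2(\xvec^0)-h_2n_1(\xvec^0)\neq0$), so that one can build an observation set $\omega\subset\mathcal{O}$ swept out by segments in this direction that terminate on $\partial\Om$, where $\varphii$ vanishes. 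Integrating \eqref{chave} along these segments expresses $\varphi_2$ as a line integral of derivatives of $\hvec\cdot\varphii$ and $\varphi_3$; H\"older and Fubini then bound the local integral of $|\varphi_2|^2$ by local integrals of those derivatives, integrations by parts with a cut-off reduce these to local integrals of $|\hvec\cdot\varphii|^2$ and $|\varphi_3|^2$ plus $\varepsilon K(\varphii,\psi,\zeta)$, and finally the equations for $\zeta$ and $\psi$ in \eqref{adjoint} convert these, exactly as in the proof of Lemma \ref{carlemanteo1}, into local integrals of $\psi$ and $\zeta$ alone. To repair your proof you would need to either supply and justify the boundary Carleman estimate together with the missing absorption of the boundary term, or replace that stage by this interior segment-integration argument.
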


	The rest of this paper is organized as follows. In Section \ref{Sec2}, we recall a previous result, needed for the proofs 
	of Theorems \ref{teo1} to \ref{teo3}. In Section \ref{Sec3}, we give the proof of Theorem \ref{teo1}. We will adapt the 
	arguments in \cite{FC-G-P} and \cite{elim.control}, that lead to the local exact controllability to the trajectories for 
	Navier-Stokes and Boussinesq systems; see also \cite{F-I3, Guerrero, Imanuvilov1}. It will be seen that the main ingredients 
	of this proof are appropriate global Carleman estimates for the solutions to linear systems similar to \eqref{eqsisacol} and 
	an inverse mapping theorem of the Liusternik's kind. Sections \ref{Sec4} and \ref{Sec5} respectively deal with the proofs of 
	Theorems \ref{teo2} and \ref{teo3}. In Section \ref{Sec6}, we present some additional questions and comments. Finally, for 
	completeness, we recall the main ideas of the proof of the Carleman estimates that serve as a starting point in an Appendix
	(see Section \ref{Appendix}).


\section{A preliminary result}\label{Sec2}
	
	A considerable part of this paper follows from the arguments and results in \cite{FC-G-P} and \cite{elim.control} adapted 
	to the present context. Thus, let us set $\yvec=\overline{\yvec}+\uvec$, $p=\overline{p}+q$, $\theta=\overline{\theta}+\phi$,
	$c=\overline{c}+z$ and let us use these identities in \eqref{eqsisacol}. Taking into account that 
	$(\overline{\yvec},\overline{p},\overline{\theta},\overline{c})$ solves \eqref{trajectory}, we find:
\begin{equation}\label{sisnullcont}
	\hspace{-0.3cm}\left\{\!\!\!
		\begin{array}{lcl}
			\uvec_t - \Delta \uvec \!+\! (\uvec\cdot\nabla)\overline{\yvec} + (\overline{\yvec}\cdot\nabla)\uvec
			 + (\uvec\cdot \nabla)\uvec + \nabla q = \vvec1_\mathcal{O} + \Fvec(\theta,c)-\overline{\Fvec}             	
			 												\!\!\!\!&\hbox{in}&\!\!\!\!  	Q,		\\
			\noalign{\smallskip}\dis 
			\nabla \cdot \uvec = 0                                                                                     	\!\!\!\!&\hbox{in}&\!\!\!\!	Q,      		\\
			\noalign{\smallskip}\dis 
			\phi_t - \Delta \phi +  \uvec\cdot\nabla\overline{\theta}+\overline{\yvec}\cdot\nabla\phi 
			+\uvec\cdot\nabla \phi= w_11_\mathcal{O}+ f_1(\theta,c)-\overline{f}_1                                     			
															\!\!\!\!&\hbox{in}&\!\!\!\!   	Q,      		\\
			\noalign{\smallskip}\dis 
			z_t - \Delta z + \uvec \cdot \nabla \overline{c}+\overline{\yvec}\cdot\nabla z
			+ \uvec \cdot \nabla z=  w_21_\mathcal{O}+f_2(\theta,c)-\overline{f}_2                                     			
															\!\!\!\!&\hbox{in}&\!\!\!\!   	Q,      		\\
			\noalign{\smallskip}\dis 
			\uvec = \0vec,~\phi = z = 0                                                                               	\!\!\!\!&\hbox{on}&\!\!\!\!  	\Sigma,	\\
			\noalign{\smallskip}\dis 
			\uvec(\cdot,0) = \yvec_0-\overline{\yvec}_0 ,~\phi(\cdot,0)=\theta_{0}-\overline{\theta}_0, ~z(\cdot,0)=c_0-\overline{c}_0  	
															\!\!\!\!&\hbox{in}&\!\!\!\!   	\Omega,
		\end{array}
	\right.
\end{equation}
	where we have introduced $\overline{\Fvec} := \Fvec(\overline{\theta},\overline{c})$,$~\overline{f}_1 := f_1(\overline{\theta},\overline{c})$ and 
	$\overline{f}_2 := f_2(\overline{\theta},\overline{c})$.

	This way, the local exact controllability to the trajectories for the system \eqref{eqsisacol} is reduced to a local null 
	controllability problem for the solution $(\uvec,q,\phi,z)$ to the nonlinear problem \eqref{sisnullcont}.

	In order to solve the latter, following a standard approach, we will first deduce the (global) null controllability of 
	a suitable linearized version, namely:
\begin{equation}\label{sislinear}
	\left\{
		\begin{array}{lcl}
			\noalign{\smallskip}\dis 
			\uvec_t  - \Delta \uvec + (\uvec\cdot\nabla)\overline{\yvec} + (\overline{\yvec}\cdot\nabla)\uvec 
			+ \nabla q = \mathbf{S} + \vvec1_\mathcal{O} + \overline{\Gvec}\phi + \overline{\Lvec}z                 		
																							& \hbox{in} & Q,     \\
			\noalign{\smallskip}\dis 
			\nabla \cdot \uvec = 0                                                                            							& \hbox{in} & Q,     \\
			\noalign{\smallskip}\dis 
			\phi_t - \Delta \phi + \uvec\cdot\nabla\overline{\theta} + \overline{\yvec}\cdot\nabla\phi
			= r_1 + w_11_\mathcal{O} + \overline{g}_1\phi + \overline{l}_1z                                                    		& \hbox{in} & Q,     \\
			\noalign{\smallskip}\dis 
			z_t - \Delta z + \uvec \cdot \nabla \overline{c} + \overline{\yvec}\cdot\nabla z
			= r_2 + w_21_\mathcal{O} + \overline{g}_2\phi + \overline{l}_2z                                                    		& \hbox{in} & Q,     \\
			\noalign{\smallskip}\dis 
			\uvec = \0vec, ~\phi = z = 0                                                                                       					& \hbox{on} & \Sigma,\\
			\noalign{\smallskip}\dis 
			\uvec(\cdot,0) = \uvec_0= , ~\phi(\cdot,0)=\phi_{0}, ~z(\cdot,0)=z_0                                                                  			& \hbox{in} & \Omega,
		\end{array}
	\right.
\end{equation}
	where $ \uvec_0:= \yvec_0-\overline{\yvec}_0$, ~$\phi_{0}:=\theta_{0}-\overline{\theta}_0$,  $z_0:=c_0-\overline{c}_0$,
	$\overline{\Gvec} := \Gvec(\overline{\theta},\overline{c})$, $\overline{\Lvec} := \Lvec(\overline{\theta},\overline{c})$,
	$\overline{g}_i := g_i(\overline{\theta},\overline{c})$, $\overline{l}_i := l_i(\overline{\theta},\overline{c})$,
 	$g_i$ and $l_i$ denote the partial derivatives of $f_i$ with respect to $\theta$ and $c$, respectively, for $i=1,2$,
	and $\Svec$, $r_1$ and $r_2$ are appropriate functions that decay exponentially as $t\rightarrow T^-$.
	Then, appropriate and rather classical arguments will be used to deduce the local null controllability of the nonlinear 
	system \eqref{sisnullcont}.

	In this Section, we will present a suitable Carleman inequality for the so called adjoint of \eqref{sislinear}. This will 
	lead easily to the null controllability result.

	Thus, let us first introduce some weight functions:
\[
	\begin{alignedat}{2}
		\noalign{\smallskip}\dis 
		\alpha(x,t) &= \frac{e^{5/4\lambda m \|\eta^0\|_{\infty}} - e^{\lambda(m\|\eta^0\|_{\infty}+\eta^0(x))}}{t^4(T-t)^4},
		\quad
		\xi(x,t) = \frac{e^{\lambda(m\|\eta^0\|_{\infty}+\eta^0(x))}}{t^4(T-t)^4},                                      
		\\
		\noalign{\smallskip}\dis 
		\widehat{\alpha}(t)&	= \dis \min_{x\in\overline{\Omega}}\alpha(x,t) 
						=\frac{e^{5/4\lambda m \|\eta^0\|_{\infty}} - e^{\lambda(m+1)\|\eta^0\|_{\infty}}}{t^4(T-t)^4},		\\
		\noalign{\smallskip}\dis 
		\alpha^*(t)&	= \dis \max_{x\in\overline{\Omega}}\alpha(x,t) 
		                     		=  \frac{e^{5/4\lambda m \|\eta^0\|_{\infty}} - e^{\lambda m\|\eta^0\|_{\infty}}}{t^4(T-t)^4},            
		\\
		\noalign{\smallskip}\dis 
		\widehat{\xi}(t)&	= \dis \min_{x\in\overline{\Omega}}\xi(x,t)
		                     			=  \frac{e^{\lambda m\|\eta^0\|_{\infty}}}{t^4(T-t)^4},                                                    		
		 \quad
		 \xi^*(t)    = \dis \max_{x\in\overline{\Omega}}\xi(x,t)
		                     =  \frac{e^{\lambda (m+1)\|\eta^0\|_{\infty}}}{t^4(T-t)^4},                                                				
		\\
		\noalign{\smallskip}\dis 
		\widehat{\mu}(t)&	= s\lambda e^{-s\widehat{\alpha}}\xi^*,
		\mu(t)	=  s^{15/4}e^{-2s\widehat{\alpha}+s\alpha^*}{\xi^*}^{15/4},
	\end{alignedat}
\]
	where $m>4$ is a fixed real number, $\eta^0\in C^2(\overline{\Om})$ is a function that verifies
\begin{center}
	$\eta^0>0$ in $\Om$, $\eta^0=0$ on $\partial\Om$ and $|\nabla\eta^0|>0 $ in 
	$\overline{\Om}\setminus \mathcal{O}_0$
\end{center}
	and $\mathcal{O}_0$ is a non-empty open subset of $\mathcal{O}$ such that $\overline{\mathcal{O}}_0\subset \mathcal{O}$.

	The adjoint system of \eqref{sislinear} is:
\begin{equation}\label{adjoint}
	\left\{
		\begin{array}{lcl}
			\noalign{\smallskip}\dis
			-\varphii_t - \Delta\varphii - D\varphii\overline{\yvec} + \nabla \pi = \widetilde{\Gvec}
			+\overline{\theta}\nabla\psi +  \overline{c}\nabla\zeta                        			&\hbox{in}& Q,      	\\
			\noalign{\smallskip}\dis
			\nabla \cdot\varphii = 0                                                                           				&\hbox{in}& Q,      	\\
		    	\noalign{\smallskip}\dis
			-\psi_t - \Delta \psi -\overline{\yvec} \cdot \nabla\psi =  \widetilde{g}_1
			+\overline{\Gvec}\cdot\varphii+ \overline{g}_1\psi+\overline{g}_2\zeta           	&\hbox{in}& Q,      	\\
		    	\noalign{\smallskip}\dis
			-\zeta_t - \Delta \zeta -\overline{\yvec} \cdot \nabla\zeta = \widetilde{g}_2
			+\overline{\Lvec}\cdot\varphii+ \overline{l}_1\psi+\overline{l}_2\zeta            	&\hbox{in}& Q,      	\\
			\noalign{\smallskip}\dis
			\varphii = \0vec,~\psi=\zeta=0                                                                    			&\hbox{on}& \Sigma, 	\\
			\noalign{\smallskip}\dis
			\varphii(\cdot,T) = \varphii_T ,~\psi(\cdot,T)=\psi_T, ~\zeta(\cdot,T)=\zeta_T                         	&\hbox{in}& \Omega,
		\end{array}
	\right.
\end{equation}
	where $D \varphii = \nabla \varphii + \nabla \varphii^{tr}$ denotes the symmetric part of the gradient of $\varphii$. 
	Here, the final and right hand side data are assumed to satisfy:
\[
	\varphii_T\in \Hvec,~\psi_T,~\zeta_T\in L^2(\Om),~\widetilde{G}_i,~\widetilde{g}_1,\widetilde{g}_2\in L^2(Q)
	~(1\leq i\leq N).
\]

	Let us introduce the following notation:
\[
	\begin{alignedat}{2} 
		\noalign{\smallskip}
		I(s,\lambda;g) = ~&\dis
		s^{-1}\jjntQ \xi^{-1}e^{-2s\alpha}|g_t|^2\,dx\,dt + s^{-1}\jjntQ \xi^{-1}e^{-2s\alpha}|\Delta g|^2 \,dx\,dt\\ 
	    	\noalign{\smallskip}
		&\dis	+\,s\lambda^2\jjntQ \xi e^{-2s\alpha}|\nabla g|^2 \,dx\,dt + s^3\lambda^4\jjntQ \xi^3e^{-2s\alpha}|g|^2 \,dx\,dt
	\end{alignedat}
\]
	for any $s, \lambda > 0$ and for any function $g = g(x,t)$ such that these integrals of $g$ make sense. Let us also set
\[
	K(\varphii, \psi,\zeta)=I(s,\lambda;\varphii)+I(s,\lambda;\psi)+I(s,\lambda;\zeta).
\]

	For the moment, we will accept the following proposition, whose proof is sketched in the Appendix:
\begin{proposition}\label{Carleman}
	Assume that $(\overline{\yvec},\overline{p},\overline{\theta},\overline{c})$ satisfies \eqref{trajectory}--\eqref{kappa}.
	There exist positive constants $\widehat{s}$, $\widehat{\lambda}$ and $\widehat{C}$, only depending on $\Om$ and 
	$\mathcal{O}$ such that, 
	for any $(\varphii _T,\psi_T,\zeta_T)\in\Hvec\times L^2(\Om)\times L^2(\Om) $ and any 
	$(\widetilde{\Gvec},\widetilde{g}_1,\widetilde{g}_2) \in \Lvec^2(Q)\times L^2(Q)\times L^2(Q)$, the solution to the adjoint 
	system \eqref{adjoint} satisfies:
\begin{equation}\label{10}
	\begin{alignedat}{2}
		\noalign{\smallskip} 
		K(\varphii,\psi,\zeta) \dis\leq~\!& 
		\widehat{C}(1 + T^2)
		\Biggl(s^{15/2}\lambda^{24}\!\!\jjntQ \!\!{\xi^*}^{15/2}e^{-4s\widehat{\alpha} + 2s\alpha^*} 
		(|\widetilde{\Gvec}|^2 + |\widetilde{g}_1|^2 + |\widetilde{g}_2|^2)                                                                 						\\ 
		\noalign{\smallskip}\dis 
		&\dis+ s^{16}\lambda^{48}\jjntq {\xi^*}^{16}e^{ -8s\widehat{\alpha} + 6s\alpha^*}
		(|\varphii|^2 + |\psi|^2 + |\zeta|^2) \Biggr)
	\end{alignedat}
\end{equation}
	for any $s \geq \widehat{s}(T^4 + T^8)$ and any
\begin{eqnarray*}
	\lambda &\geq &
	\widehat{\lambda}\Bigl( 1 + \|\overline{\yvec}\|_{\infty} + \|\overline{\theta}\|_{\infty} 
	+  \|\overline{c}\|_{\infty} + \|\overline{\Gvec}\|^{1/2}_{\infty} + \|\overline{\Lvec}\|^{1/2}_{\infty} 
	+  \|\overline{g}_1\|^{1/2}_{\infty} + \|\overline{g}_2\|^{1/2}_{\infty}                                                           \\
            &     & 
	+~ \|\overline{l}_1\|^{1/2}_{\infty} + \|\overline{l}_2\|^{1/2}_{\infty} 
	+  \|\overline{\yvec}_t\|^2_{L^2(0,T;\Lvec^{\kappa}(\Om))} + \|\overline{\theta} _t\|^2_{L^2(0,T;L^{\kappa}(\Om))}                 \\
            &     & 
	+~ \|\overline{c} _t\|^2_{L^2(0,T;L^{\kappa}(\Om))} + \exp \left\{ \hat{\lambda}T(1 + \|\overline{\yvec}\|^2_{\infty} 
	+  \|\overline{\theta}\|^2_{\infty} + \|\overline{c}\|^2_{\infty}) \right\} \Bigl).
\end{eqnarray*}
\end{proposition}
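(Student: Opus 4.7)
The plan is to derive \eqref{10} by applying a standard global Carleman inequality to each of the three equations of \eqref{adjoint} separately, summing the resulting estimates, and absorbing the coupling terms into the left-hand side by taking $s$ and $\lambda$ large enough. The structure of the weights is already tuned for this: $\widehat\alpha(t)\leq\alpha(x,t)\leq\alpha^*(t)$ and $\widehat\xi(t)\leq\xi(x,t)\leq\xi^*(t)$, so interior weights of the form $e^{-2s\alpha}$ can be compared with the ``boundary-value'' weights $e^{-s\widehat\alpha},e^{-s\alpha^*}$ that appear on the right of \eqref{10}.

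First, for the Stokes-like equation satisfied by $\varphii$, I would apply the Carleman inequality for the adjoint Stokes system (in the spirit of Imanuvilov--Puel and Fern\'andez-Cara--Guerrero--Imanuvilov--Puel), treating $\widetilde{\Gvec}+\overline{\theta}\nabla\psi+\overline{c}\nabla\zeta+D\varphii\,\overline{\yvec}$ as a source. This produces a bound for $I(s,\lambda;\varphii)$ by a weighted $L^2(Q)$-norm of this source plus a local term in $|\varphii|^2$ over $\mathcal{O}_0\times(0,T)$. For $\psi$ and $\zeta$, I would apply the classical Fursikov--Imanuvilov parabolic Carleman inequality, treating the zeroth-order coupling terms $\overline{\Gvec}\cdot\varphii+\overline{g}_1\psi+\overline{g}_2\zeta$ and $\overline{\Lvec}\cdot\varphii+\overline{l}_1\psi+\overline{l}_2\zeta$ as sources. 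This bounds $I(s,\lambda;\psi)$ and $I(s,\lambda;\zeta)$ modulo local terms in $|\psi|^2$ and $|\zeta|^2$.

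The next step is to sum these three inequalities and absorb every term still involving the adjoint state into $K(\varphii,\psi,\zeta)$. Using the $L^\infty$-bounds on $\overline{\yvec},\overline{\Gvec},\overline{\Lvec},\overline{g}_i,\overline{l}_i$, the zeroth-order couplings are dominated by the $s^3\lambda^4\xi^3e^{-2s\alpha}|\cdot|^2$ parts of $I$ as soon as $\lambda$ exceeds a constant times $\|\overline{g}_i\|_\infty^{1/2}+\|\overline{l}_i\|_\infty^{1/2}+\|\overline{\Gvec}\|_\infty^{1/2}+\|\overline{\Lvec}\|_\infty^{1/2}$. The gradient terms $\overline{\theta}\nabla\psi,\overline{c}\nabla\zeta$ in the Stokes source are absorbed by the $s\lambda^2\xi e^{-2s\alpha}|\nabla\psi|^2$ and $s\lambda^2\xi e^{-2s\alpha}|\nabla\zeta|^2$ pieces of $I$, using the $L^\infty$ bound on $\overline{\theta},\overline{c}$; the convective contribution $D\varphii\,\overline{\yvec}$ is absorbed in the same way, which is what forces $\lambda$ to dominate $\|\overline{\yvec}\|_\infty$. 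What survives yields exactly the two integrals on the right of \eqref{10}.

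The main obstacle is that the Stokes Carleman inequality naturally delivers a local term involving $|\nabla\varphii|^2$ or the pressure $\pi$, not the clean $|\varphii|^2$ that appears in \eqref{10}. Trading the former for the latter requires an extra bootstrapping step: apply the Laplacian to the Stokes equation to eliminate $\pi$ (using $\nabla\cdot\varphii=0$), run a scalar parabolic Carleman for $\Delta\varphii$ on a slightly larger subdomain with $\overline{\mathcal{O}}_0\subset\mathcal{O}$, then reconstruct a local estimate for $\varphii$ itself via a cut-off function and local elliptic regularity for the Stokes operator. This iteration, together with the time differentiation that pulls in $\overline{\yvec}_t,\overline{\theta}_t,\overline{c}_t$ in $L^2(0,T;\Lvec^\kappa(\Om))$, is what produces the heavy powers $s^{15/2}\lambda^{24}$ and $s^{16}\lambda^{48}$, and also the exponential factor $\exp\{\widehat{\lambda}T(1+\|\overline{\yvec}\|_\infty^2+\|\overline{\theta}\|_\infty^2+\|\overline{c}\|_\infty^2)\}$ arising from a Gronwall-type absorption across time.
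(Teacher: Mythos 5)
Your overall architecture --- componentwise Carleman estimates followed by absorption of the coupling terms once $s$ and $\lambda$ exceed thresholds proportional to the various $L^\infty$-norms --- matches the paper's, and your bookkeeping of which coefficient forces which term in the lower bound for $\lambda$ is correct. The gap is at the one step you yourself flag as the main obstacle: converting the local Stokes observation into the zero-order term $|\varphii|^2$ of \eqref{10}. Your device --- apply the Laplacian to the momentum equation to eliminate $\pi$ and run a scalar parabolic Carleman estimate for $\Delta\varphii$ --- does not work at the stated regularity: the differentiated equation carries $\Delta\widetilde{\Gvec}$ and second derivatives of $D\varphii\,\overline{\yvec}$, $\overline{\theta}\nabla\psi$, $\overline{c}\nabla\zeta$ on its right-hand side, and since $\widetilde{\Gvec}$ is only in $\Lvec^2(Q)$ and $\overline{\yvec},\overline{\theta},\overline{c}$ only in $L^\infty(Q)$, that source is merely a second-order distribution, for which no off-the-shelf parabolic Carleman estimate is available. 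This is precisely the obstruction the paper's argument is built to avoid.

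The paper instead applies the scalar heat-equation Carleman estimate to each component of $\varphii$ and to $\psi,\zeta$, keeping $\nabla\pi$ as part of the source; this gives clean local terms in $|\varphii|^2,|\psi|^2,|\zeta|^2$ at the price of a \emph{global} term $\iint_Q e^{-2s\alpha}|\nabla\pi|^2$. That term is then localized by applying, for a.e.\ $t$, the elliptic Carleman estimate of Imanuvilov--Puel to $\Delta\pi=\nabla\cdot[D\varphii\,\overline{\yvec}+\widetilde{\Gvec}+\overline{\theta}\nabla\psi+\overline{c}\nabla\zeta]$, \emph{regarded as an equation with $H^{-1}$ right-hand side} --- which is exactly what tolerates the low regularity of $\widetilde{\Gvec}$ and $\overline{\yvec}$. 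The remaining local integrals of $|\nabla\pi|^2$, and then of $|\Delta\varphii|^2$ and $|\varphii_t|^2$, are estimated following Guerrero; it is the weights $\widehat{\mu}=s\lambda e^{-s\widehat{\alpha}}\xi^*$ and $\mu=s^{15/4}e^{-2s\widehat{\alpha}+s\alpha^*}{\xi^*}^{15/4}$ in those local estimates (not a Gronwall iteration) that generate the powers $s^{15/2}\lambda^{24}$ and $s^{16}\lambda^{48}$ in \eqref{10}, while the exponential factor in the threshold for $\lambda$ does come from energy estimates, as you suggest. To repair your proof, replace the ``apply $\Delta$'' step by this pressure-equation argument (or otherwise justify a Carleman estimate with a distributional right-hand side).
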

%


\section{Proof of Theorem~\ref{teo1}}\label{Sec3}

	Without any lack of generality, we can assume that $N=3$ and $h_1\neq 0$. In order to prove the result, we have to establish 
	some new Carleman estimates. The first one is given in the following result:
\begin{lemma}\label{carlemanteo1}
	Assume that $(\overline{\yvec},\overline{p},\overline{\theta},\overline{c})$ satisfies \eqref{trajectory}--\eqref{kappa}.
	Under the assumptions of Theorem \ref{teo1}, there exist positive constants $C$, $\bar{\alpha}$ and $\tilde{\alpha}$, 
	only depending on $\Om$, $\mathcal{O}$, $T$, $\overline{\yvec}$, $\bar{\theta}$ and $\overline{c}$, satisfying 
	$0<\tilde{\alpha}<\bar{\alpha}$ and $8\tilde{\alpha}-7\bar{\alpha}>0$ such that, for any 
	$(\varphii _T, \psi_T, \zeta_T) \in \Hvec\times L^2(\Omega) \times L^2(\Omega)$ and any 
	$(\widetilde{\Gvec},\widetilde{g}_1,\widetilde{g}_2) \in \Lvec^2(Q)\times L^2(Q)\times L^2(Q)$, the solution to the 
	adjoint system \eqref{adjoint} satisfies:
\begin{equation}
\begin{alignedat}{2}\label{carleman1}
	K(\varphii,\psi,\zeta)\leq   C&\left(\dis\jjntQ e^{-4\tilde{\alpha} + 2\bar{\alpha}\over t^4(T-t)^4}t^{-30}(T-t)^{-30}
	         |\widetilde{\mathbf{G}}|^2)\right.                                                                                            \\ 
	\noalign{\smallskip}\dis
	& ~~+ \dis\jjntQ e^{-16\tilde{\alpha} + 14\bar{\alpha}\over t^4(T-t)^4}t^{-116}(T-t)^{-116}
	         (|\widetilde{g}_1|^2 + |\widetilde{g}_2|^2)                                                                                   \\ 
	\noalign{\smallskip}\dis
	& ~~+ \dis\jjntq e^{-8\tilde{\alpha} + 6\bar{\alpha}\over t^4(T-t)^4}t^{-64}(T-t)^{-64}
	         |\varphi_2|^2                                                                                                                 \\ 
	\noalign{\smallskip}\dis
	& ~~\left.+ \dis\jjntq e^{-16\tilde{\alpha} + 14\bar{\alpha}\over t^4(T-t)^4}t^{-132}(T-t)^{-132}
	       (|\psi|^2+|\zeta|^2)\right).
\end{alignedat}
\end{equation}
\end{lemma}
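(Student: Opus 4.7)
Plan of proof.  The aim is to improve Proposition \ref{Carleman} by removing from its right-hand side the local observations of $\varphi_1$ and $\varphi_3$, keeping only a local observation of $\varphi_2$.  Under \eqref{f-case-1}, with $f_1\equiv f_2\equiv 0$ and $\Fvec(\theta,c)=\theta\evec_3 + c\vec{\hvec}$, one has $\overline{\Gvec}=\evec_3$, $\overline{\Lvec}=\vec{\hvec}$, and $\overline{g}_i=\overline{l}_i=0$, so that the third and fourth equations of the adjoint system \eqref{adjoint} reduce to
\[
\varphi_3 = -\psi_t - \Delta\psi - \overline{\yvec}\cdot\nabla\psi - \widetilde{g}_1,
\]
\[
h_1\varphi_1 = -\zeta_t - \Delta\zeta - \overline{\yvec}\cdot\nabla\zeta - \widetilde{g}_2 - h_2\varphi_2 - h_3\varphi_3.
\]
Since $h_1\neq 0$, both $\varphi_1$ and $\varphi_3$ are pointwise determined by $\psi$, $\zeta$, $\varphi_2$ and the source terms $\widetilde{g}_1$, $\widetilde{g}_2$ together with parabolic derivatives of $\psi,\zeta$.

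Starting from \eqref{10}, I would pick a cutoff $\chi\in C_c^\infty(\mathcal{O})$ equal to $1$ on the observation region, and denote by $\omega$ the weight $s^{16}\lambda^{48}{\xi^*}^{16}e^{-8s\widehat\alpha+6s\alpha^*}$ appearing in the local term.  Multiplying the $\psi$-equation by $\chi^2\omega\varphi_3$ and using Young's inequality gives
\[
\jjntq \chi^2\omega|\varphi_3|^2 \leq C \jjntq \chi^2\omega\bigl(|\psi_t|^2 + |\Delta\psi|^2 + |\nabla\psi|^2 + |\widetilde{g}_1|^2\bigr),
\]
and, using the $\zeta$-equation and the relation for $\varphi_1$, the analogous inequality
\[
\jjntq \chi^2\omega|\varphi_1|^2 \leq C \jjntq \chi^2\omega\bigl(|\zeta_t|^2 + |\Delta\zeta|^2 + |\nabla\zeta|^2 + |\widetilde{g}_2|^2 + |\varphi_2|^2 + |\varphi_3|^2\bigr).
\]
The local first-order terms $\jjntq\chi^2\omega|\nabla\psi|^2$, $\jjntq\chi^2\omega|\nabla\zeta|^2$ are then recast, by integration by parts against the cutoff, into the local second-order terms plus a local $|\psi|^2$ (respectively $|\zeta|^2$) term with a stronger, faster-vanishing weight.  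The global-order terms $\jjntq\chi^2\omega(|\psi_t|^2+|\Delta\psi|^2)$ and their $\zeta$-analogues are absorbed into $K(\varphii,\psi,\zeta)$: provided $s$ and $\lambda$ are sufficiently large, on $\mathcal{O}$ the weight $\chi^2\omega$ is strictly dominated by the weight $s^{-1}\xi^{-1}e^{-2s\alpha}$ carrying $|\psi_t|^2$ and $|\Delta\psi|^2$ inside $K$, so that these contributions are bounded by $\varepsilon K(\varphii,\psi,\zeta)$ and absorbed on the left.

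Inserting these bounds into \eqref{10} one is left with only a local $|\varphi_2|^2$ term, local $|\psi|^2$ and $|\zeta|^2$ terms and global $|\widetilde{\Gvec}|^2$, $|\widetilde{g}_1|^2$, $|\widetilde{g}_2|^2$ terms, all with weights of the same exponential type as those in \eqref{10} but with larger polynomial factors in $s$, $\lambda$ and $\xi^*$ generated by the integration-by-parts.  Fixing now $s=s_0$ and $\lambda=\lambda_0$ at sufficiently large constants depending on $T$, $\Omega$, $\mathcal{O}$ and the trajectory, the exponents $-As\widehat\alpha+Bs\alpha^*$ become time-dependent functions of the form $(-A\tilde\alpha+B\bar\alpha)/(t^4(T-t)^4)$ with two explicit constants $0<\tilde\alpha<\bar\alpha$, and the polynomial factors in $s,\lambda,\xi^*$ produce the powers $t^{-30},\,t^{-64},\,t^{-116},\,t^{-132}$ appearing in \eqref{carleman1}.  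The inequality $8\tilde\alpha-7\bar\alpha>0$ is exactly the requirement that the worst of the four resulting exponents, $-16\tilde\alpha+14\bar\alpha$, remain strictly negative, so that every weight in \eqref{carleman1} vanishes as $t\to 0^+$ and $t\to T^-$.  The main obstacle is precisely this weight-bookkeeping step: one must check, using the explicit formulas for $\widehat\alpha,\alpha^*,\xi^*$, that the polynomial-in-$s,\lambda$ losses produced by Young's inequality and by the integration-by-parts on the cutoff are strictly dominated, in $\mathcal{O}$, by the exponential gain $e^{-2s\alpha}$ that $K$ provides, so that the absorption is legitimate with a constant $\varepsilon<1$.
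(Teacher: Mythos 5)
Your structural starting point is the right one: under \eqref{f-case-1} the third and fourth equations of \eqref{adjoint} express $\varphi_3=\overline{\Gvec}\cdot\varphii$ and $\hvec\cdot\varphii=\overline{\Lvec}\cdot\varphii$ through $\psi$, $\zeta$ and the data, and this is exactly how the paper trades the local observations of $\varphi_1,\varphi_3$ for local observations of $\psi,\zeta$. However, the step in which you bound $\jjntq \chi^2\omega|\varphi_3|^2$ by $C\jjntq\chi^2\omega(|\psi_t|^2+|\Delta\psi|^2+|\nabla\psi|^2+|\widetilde g_1|^2)$ and then absorb the local integrals of $|\psi_t|^2$ and $|\Delta\psi|^2$ into $K(\varphii,\psi,\zeta)$ does not work. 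That absorption would require $s^{16}\lambda^{48}{\xi^*}^{16}e^{-8s\widehat{\alpha}+6s\alpha^*}\le\varepsilon\, s^{-1}\xi^{-1}e^{-2s\alpha}$ on $\mathcal{O}\times(0,T)$, i.e.\ that the exponent $s(-8\widehat{\alpha}+6\alpha^*+2\alpha)$ be bounded above. Since $\alpha\ge\widehat{\alpha}$ pointwise, this exponent is at least $6s(\alpha^*-\widehat{\alpha})$, which is strictly positive and blows up like $1/(t^4(T-t)^4)$ as $t\to0^+$ and $t\to T^-$. So the loss is not ``polynomial in $s,\lambda$'' as your final paragraph asserts: the observation weight is \emph{exponentially} larger than the weight carried by $|\psi_t|^2$ and $|\Delta\psi|^2$ inside $K$ near the endpoints, and no choice of $s,\lambda$ repairs this. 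Rebalancing Young's inequality so that the $K$-weight sits on $|\psi_t|^2$ merely sends the exponentially large factor back onto $|\varphi_3|^2$, which is circular.

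The paper resolves precisely this difficulty by integration by parts. One writes $\jjntq\beta\,|\hvec\cdot\varphii|^2\le\int\!\!\!\int_{\mathcal{O}\times(0,T)}\beta\vartheta\,(\hvec\cdot\varphii)(-\zeta_t-\Delta\zeta-\overline{\yvec}\cdot\nabla\zeta-\widetilde g_2)$ with a cutoff $\vartheta$ and then passes \emph{all} derivatives from $\zeta$ onto $\beta\vartheta\varphii$ (the boundary terms vanish thanks to $\beta(0)=\beta(T)=0$, the support of $\vartheta$ and $\nabla\cdot\overline{\yvec}=0$). After this, $\zeta$ appears undifferentiated while the derivatives fall on $\varphii$ and on the weight, so Young's inequality produces $\varepsilon$ times the terms $s^{-1}\xi^{-1}e^{-2s\alpha}|\varphii_t|^2$, $s^{-1}\xi^{-1}e^{-2s\alpha}|\Delta\varphii|^2$, etc.\ of $I(s,\lambda;\varphii)$ --- which \emph{are} absorbable into the left-hand side --- plus a local integral of $|\zeta|^2$ with the doubly degraded weight $e^{(-16\tilde{\alpha}+14\bar{\alpha})/(t^4(T-t)^4)}t^{-132}(T-t)^{-132}$; the condition $8\tilde{\alpha}-7\bar{\alpha}>0$, whose role you identified correctly, guarantees that this degraded weight still vanishes at $t=0$ and $t=T$. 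You do invoke integration by parts, but only for the first-order term $\nabla\psi$; it is indispensable for the time-derivative and Laplacian terms as well, and that is the missing idea in your argument.
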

\begin{proof}
	By choosing $\om\subset\subset\mathcal{O}$, $s_1=\widehat{s}(T^4 + T^8)$,
\begin{eqnarray*}
	\lambda_1 &= &
	\widehat{\lambda}\Bigl( 1 + \|\overline{\yvec}\|_{\infty} + \|\overline{\theta}\|_{\infty} 
	+  \|\overline{c}\|_{\infty} + \|\overline{\Gvec}\|^{1/2}_{\infty} + \|\overline{\Lvec}\|^{1/2}_{\infty} 
	+  \|\overline{g}_1\|^{1/2}_{\infty} + \|\overline{g}_2\|^{1/2}_{\infty}                                                           \\
            &     & 
	+~ \|\overline{l}_1\|^{1/2}_{\infty} + \|\overline{l}_2\|^{1/2}_{\infty} 
	+  \|\overline{\yvec}_t\|^2_{L^2(0,T;\Lvec^{\kappa}(\Om))} + \|\overline{\theta} _t\|^2_{L^2(0,T;L^{\kappa}(\Om))}                 \\
            &     & 
	+~ \|\overline{c} _t\|^2_{L^2(0,T;L^{\kappa}(\Om))} + \exp \left\{ \hat{\lambda}T(1 + \|\overline{\yvec}\|^2_{\infty} 
	+  \|\overline{\theta}\|^2_{\infty} + \|\overline{c}\|^2_{\infty}) \right\} \Bigl),
\end{eqnarray*}
\[
	\overline{\alpha}  = s_1( e^{5/4\lambda_1 m \|\eta^0\|_{\infty}} - e^{\lambda_1 m\|\eta^0\|_{\infty}}),
	\quad 
	\widetilde{\alpha} = s_1( e^{5/4\lambda_1 m \|\eta^0\|_{\infty}} - e^{\lambda_1(m+1)\|\eta^0\|_{\infty}}),
\]
	and 
\begin{equation*}
	C_1 = \widehat{C}(1 + T^2)s_1^{17}\lambda_1^{48} e^{17\lambda_1(m+1)\|\eta^0\|_{\infty}},
\end{equation*}
	we see from~\eqref{10} that
\begin{equation}
\begin{array}{lll}\label{carleman2}
	{\ \ \ \ \ \ \dis       \int\!\!\!\int_Q e^{\frac{-2\overline{\alpha}}{t^4(T-t)^4}}t^4(T-t)^4
	(|\varphii_t|^2+|\psi_{t}|^2+|\zeta_{t}|^2+|\Delta\varphii|^2+|\Delta\psi|^2+|\Delta\zeta|^2)}                                       \\ 
	\noalign{\smallskip}
	{\ \ \ \     \dis     + \int\!\!\!\int_Q e^{\frac{-2\overline{\alpha}}{t^4(T-t)^4}}t^{-4}(T-t)^{-4}
	(|\nabla\varphii|^2+|\nabla\psi|^2+|\nabla\zeta|^2)}                                                                                \\ 
	\noalign{\smallskip}
	{\ \ \ \     \dis     + \int\!\!\!\int_Q e^{\frac{-2\overline{\alpha}}{t^4(T-t)^4}}t^{-12}(T-t)^{-12}
	(|\varphii|^2+|\psi|^2+|\zeta|^2)}                                                                                                  \\ 
	\noalign{\smallskip}
	{            \dis\leq C_1\Biggl(\int\!\!\!\int_Q e^{\frac{-4\widetilde{\alpha}+2\overline{\alpha}}{t^4(T-t)^4}}t^{-30}(T-t)^{-30}
	(|\widetilde{\mathbf{G}}|^2 + |\widetilde{g}_1|^2 + |\widetilde{g}_2|^2)\Biggr.}                                                   \\ 
	\noalign{\smallskip}
	{\ \ \ \     \dis     +  \Biggl.\int\!\!\!\int_{\omega\times(0,T)} e^{\frac{-8\widetilde{\alpha}+6\overline{\alpha}}
	{t^4(T-t)^4}}t^{-64}(T-t)^{-64}(|\varphii|^2 + |\psi|^2 + |\zeta|^2)  \Biggr)}.
\end{array}
\end{equation}

	Notice that $0<\widetilde{\alpha}<\overline{\alpha}$. Moreover, by taking $\lambda_1$ large enough, it can be assumed 
	that $8\widetilde{\alpha}-7\overline{\alpha}>0$.

	Since $h_2\neq 0$, we have
\begin{equation}\label{h}
	|\varphi_1|^2+|\varphi_2|^2+|\varphi_3|^2 \leq C_1 (|\varphi_2|^2+|\varphii\cdot \overrightarrow{\hvec}|^2+|\varphi_3|^2).
\end{equation}
	Thus, by combining \eqref{h} with \eqref{carleman2}, the task is reduced to prove an estimate of the integrals
\begin{equation}\label{B}
	I_3 := \int\!\!\!\int_{\om\times(0,T)}e^{\frac{-8\widetilde{\alpha} + 6\overline{\alpha}}{t^4(T-t)^4}}t^{-64}(T-t)^{-64}|\varphi_3|^2
\end{equation}
	and
\begin{equation}\label{A}
	\ \ \ I_{\hvec} := \int\!\!\!\int_{\omega\times(0,T)}e^{\frac{-8\widetilde{\alpha} + 6\overline{\alpha}}{t^4(T-t)^4}}t^{-64}(T-t)^{-64}
	|\varphii\cdot \overrightarrow{\hvec}|^2
\end{equation}
of the form
$$
	I_3 + I_{\hvec} \leq \varepsilon K(\varphii,\psi,\zeta) + C_{\varepsilon}(\dots),
$$
	where the dots contain local integrals of $\psi$, $\zeta$, $\widetilde{g}_1$ and $\widetilde{g}_2$.

	To do this, let us set
\begin{equation}\label{eq:beta}
	\beta(t) = e^{\frac{-8\widetilde{\alpha}+6\overline{\alpha}}{t^4(T-t)^4}}t^{-64}(T-t)^{-64} \quad \forall t\in(0,T)
\end{equation}
	and let us introduce a cut-off function $\vartheta\in C^2_0(\mathcal{O})$ such that
\begin{equation*}\label{vartheta_0}
	\vartheta\equiv1\hbox{ in }\omega,~0\leq\vartheta\leq1.
\end{equation*}
	For instance, from the differential equation satisfied by $\zeta$, see (\ref{adjoint}), we have:
\begin{equation}\label{65}
\begin{array}{ll}
	\dis      \int\!\!\!\int_{\om\times(0,T)}\beta|\varphii\cdot \overrightarrow{\hvec}|^2 
	&\dis\!\leq ~\int\!\!\!\int_{\mathcal{O}\times(0,T)} \beta\vartheta(\varphii\cdot \overrightarrow{\hvec})
	          (-\zeta_t - \Delta \zeta -\overline{\mathbf{y}} \cdot \nabla\zeta- \widetilde{g}_2)                         \\ 
	\noalign{\smallskip}
	&\dis = ~C \int\!\!\!\int_{\mathcal{O}\times(0,T)}\overrightarrow{\hvec} \cdot 
	[\beta\vartheta\varphii(-\zeta_t  - \Delta \zeta -\overline{\mathbf{y}} \cdot \nabla\zeta-  \widetilde{g}_2)].
\end{array}
\end{equation}
	To get the estimate of $I_{\hvec}$, we will now perform integrations by parts in the last integral and we will ``pass'' 
	all the derivatives from $\zeta$ to $\varphi$:

$\bullet$ 
	First, we integrate by parts with respect to $t$, taking into account that
	\linebreak$\beta(0)=\beta(T)=0$:
\begin{eqnarray}\label{66}
	&& -C\int\!\!\!\int_{\mathcal{O}\times(0,T)} \overrightarrow{\hvec}\cdot(\beta\vartheta\varphii\zeta_t)
	   =C \int\!\!\!\int_{\mathcal{O}\times(0,T)}\overrightarrow{\hvec} 
	   \cdot(\beta_t\vartheta\varphii\zeta+\beta\vartheta\varphii_{t}\zeta)                                    \nonumber\\ 
	\noalign{\smallskip}
	&&\qquad \leq \varepsilon  K(\varphii,\psi,\zeta)																\\ 
	\noalign{\smallskip}
	&&\qquad \ \ +~C(\varepsilon)\int\!\!\!\int_{\mathcal{O}\times(0,T)} e^{\frac{-16\widetilde{\alpha}
	   +14\overline{\alpha}}{t^4(T-t)^4}}t^{-132}(T-t)^{-132}|\zeta|^2.										\nonumber
\end{eqnarray}

$\bullet$ Next, we integrate by parts twice with respect to $x$.
          Here, we use the properties of the cut-off function $\vartheta$:
\begin{eqnarray}\label{67}
	&&-C\int\!\!\!\int_{\mathcal{O}\times(0,T)}\overrightarrow{\hvec} \cdot( \beta\vartheta \varphii \Delta \zeta)  
	= -C\int\!\!\!\int_{\mathcal{O}\times(0,T)}\overrightarrow{\hvec} \cdot[\beta\Delta(\vartheta \varphii) \zeta]               \nonumber \\ 
	\noalign{\smallskip}
	&&\ \ +\ C\int\!\!\!\int_{\partial\mathcal{O}\times(0,T)} \overrightarrow{\hvec} \cdot[\beta\zeta \partial_\nvec(\vartheta \varphii)
	-\beta\vartheta \varphii \partial_\nvec\zeta] \,dS\,dt                                                                                 \\ 
	\noalign{\smallskip}
	&& =\ C\int\!\!\!\int_{\mathcal{O}\times(0,T)}\overrightarrow{\hvec} \cdot\{ \beta[-\Delta\vartheta \varphii-
	2(\nabla\vartheta \cdot\nabla\varphi_1,\nabla\vartheta \cdot\nabla\varphi_2,\nabla\vartheta \cdot\nabla\varphi_3)
	-\vartheta \Delta\varphii]\zeta\}																						   \nonumber\\ 
	\noalign{\smallskip}
	&&\leq\ \varepsilon K(\varphii,\psi,\zeta) + C(\varepsilon)\int\!\!\!\int_{\mathcal{O}\times(0,T)} e^{\frac{-16\widetilde{\alpha}
	+14\overline{\alpha}}{t^4(T-t)^4}}t^{-132}(T-t)^{-132}|\zeta|^2.														   \nonumber
\end{eqnarray}

$\bullet$ We also integrate by parts in the third term with respect to $x$ and we use the incompressibility 
          condition on $\overline{\mathbf{y}}$:
\begin{eqnarray}\label{68}
	&       &- C\int\!\!\!\int_{\mathcal{O}\times(0,T)}\overrightarrow{\hvec} \cdot\ 
	   [\beta\vartheta \varphii(\overline{\mathbf{y}} \cdot \nabla\zeta)]										                \\ 
	\noalign{\smallskip}
	&  =   &- C\int\!\!\!\int_{\mathcal{O}\times(0,T)}\overrightarrow{\hvec} \cdot\ \{ \beta\sum^N_{i=1}\left[\nabla\cdot
	          (\vartheta \varphi_i\overline{\mathbf{y}}\zeta)-[\nabla(\vartheta\varphi_i)\cdot\overline{\mathbf{y}}]
	          \zeta-\vartheta\varphi(\nabla\cdot \overline{\mathbf{y}} )\zeta \right]\mathbf{e}_i \}                      \nonumber\\ 
	\noalign{\smallskip}
	& \leq &\ \varepsilon K(\varphii,\psi,\zeta) + C(\varepsilon)\int\!\!\!\int_{\mathcal{O}\times(0,T)} 
	          e^{\frac{-16\widetilde{\alpha}+14\overline{\alpha}}{t^4(T-t)^4}}t^{-132}(T-t)^{-132}|\zeta|^2.              \nonumber
\end{eqnarray}

$\bullet$ We finally apply Young's inequality in the last term and we get:
\begin{eqnarray}\label{69}
	-\int\!\!\!\int_{\mathcal{O}\times(0,T)}\overrightarrow{\hvec}\cdot( \beta\vartheta \varphii \widetilde{g}_2) 
	&\leq& \varepsilon K(\varphii,\psi,\zeta)																		          \\ 
	\noalign{\smallskip}
	&    & + C(\varepsilon)\int\!\!\!\int_{\mathcal{O}\times(0,T)} e^{\frac{-16\widetilde{\alpha}
	       + 14\overline{\alpha}}{t^4(T-t)^4}}t^{-116}(T-t)^{-116}|\widetilde{g}_2|^2.								\nonumber
\end{eqnarray}

   From \eqref{carleman2} and \eqref{65}-\eqref{69}, by choosing $\varepsilon>0$ sufficiently small, it is easy to deduce 
   the desired estimate of~\eqref{A}. We can argue in the same way starting from \eqref{B} and the equation satisfied by~$\psi$, 
   which leads to a similar estimate.

   Finally, putting all these inequalities together, we find~\eqref{carleman1}.
\end{proof}
\

	We will now deduce a second Carleman inequality with weights that do not vanish at $t = 0$.
	More precisely, let us consider the function
\begin{equation*}
	l(t)=
\left\{
\begin{array}{lll}
	T^2 / 4					&\text{for}& 0  \leq t\leq T/2,\\
	t(T-t)					&\text{for}& T/2\leq t\leq T
\end{array}
\right.
\end{equation*}
	and the following associated weight functions:
\begin{equation*}
\begin{array}{llll}
	\noalign{\smallskip}\dis
	\beta_1=e^{\frac{\overline{\alpha}}{[l(t)]^4}}[l(t)]^{2} \text{,  }\ \ \beta_2(t)= e^{\frac{\overline{\alpha}}{[l(t)]^4}}[l(t)]^{6}   
	        \text{,  }\ \ \beta_3(t)=e^{\frac{2\tilde{\alpha}-\overline{\alpha}}{[l(t)]^4}}[l(t)]^{15} ,                                \\
	\noalign{\smallskip}\dis
	\beta_4(t)=e^{\frac{8\tilde{\alpha}-7\overline{\alpha}}{[l(t)]^4}}[l(t)]^{58} \text{, }\ \ \beta_5(t)=e^{\frac{4\tilde{\alpha}
	-3\overline{\alpha}}{[l(t)]^4}}[l(t)]^{32} \ \ \text{  and  }\ \ \ \beta_6(t)=e^{\frac{8\tilde{\alpha}
	-7\overline{\alpha}}{l(t)[l(t)]^4}}[l(t)]^{66}.
\end{array}
\end{equation*}

	By combining Lemma~\ref{carlemanteo1} and the classical energy estimates satisfied by $\varphi$, $\psi$ and $\zeta$, 
	we easily deduce the following:

\begin{lemma}\label{llema2}
	Assume that $(\overline{\yvec},~\overline{p},~\overline{\theta},~\overline{c})$ satisfies \eqref{trajectory}--\eqref{kappa}.
	Under the assumptions of Theorem~\emph{\ref{teo1}}, there exist positive constants $C$, $\overline{\alpha}$ and~$\tilde{\alpha}$ 
	depending on $\Omega$, $\mathcal{O}$, $T$, $\overline{\yvec}$, $\overline{\theta}$ and $\overline{c}$ and satisfying 
	$0<\tilde{\alpha}<\overline{\alpha}$ and $8\tilde{\alpha}-7\overline{\alpha}>0$ such that, for any 
	$(\varphii _T, \psi_T, \zeta_T) \in \Hvec\times L^2(\Omega) \times L^2(\Omega) $ and any 
	$(\widetilde{\Gvec}, \widetilde{g}_1, \widetilde{g}_2) \in \Lvec^2(Q)\times L^2(Q)\times L^2(Q)$, 
	the solution to the adjoint system \eqref{adjoint} satisfies:
\begin{equation}
\begin{array}{l}\label{ccarleman3}
	\dis
	\int\!\!\!\int_Q \left[\beta_1^{-2}(|\nabla\varphii|^2+|\nabla\psi|^2+|\nabla\zeta|^2)
	+\beta_2^{-2}(|\varphii|^2+|\psi|^2+|\zeta|^2) \right]  \\ 
	\noalign{\smallskip}\dis 
	\quad +\|\varphii(0)\|^2_{\Lvec^2(\Omega)}+\|\psi(0)\|^2_{L^2(\Omega)} +\|\zeta(0)\|^2_{L^2(\Omega)} \\ 
	\noalign{\smallskip}\dis 
	\quad \leq C\Biggl(\int\!\!\!\int_Q\left[\beta_3^{-2}|\widetilde{\Gvec}|^2
	+\beta_4^{-2}(|\widetilde{g}_1|^2+|\widetilde{g}_2|^2)\right] \Biggr.                                    \\ 
	\noalign{\smallskip}\dis 
	\quad \quad\Biggl. +\int\!\!\!\int_{\mathcal{O}\times(0,T)} \left[\beta_5^{-2}|\varphi_2|^2
	+\beta_6^{-2}(|\psi|^2+|\zeta|^2)\right]\Biggr).
\end{array}
\end{equation}
\end{lemma}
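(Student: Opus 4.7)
The strategy is to split the time interval at $T/2$: on $[T/2,T]$ the weights $\beta_i^{-2}$ already coincide, up to multiplicative constants depending only on $T,\overline{\alpha},\widetilde{\alpha}$, with the corresponding weights appearing in~\eqref{carleman1}; on $[0,T/2]$ they are constant, so I can reduce the corresponding integrals to classical (backward) parabolic energy quantities. Concretely, since $l(t)=t(T-t)$ on $[T/2,T]$, a direct inspection of exponents gives for instance $\beta_1^{-2}=e^{-2\overline{\alpha}/(t^4(T-t)^4)}(t(T-t))^{-4}$, which is precisely the $|\nabla\varphii|^2$ weight in~\eqref{carleman2}; $\beta_3^{-2}=e^{(-4\widetilde{\alpha}+2\overline{\alpha})/(t^4(T-t)^4)}(t(T-t))^{-30}$ matches the $|\widetilde{\Gvec}|^2$ weight in~\eqref{carleman1}; and analogously for $\beta_2,\beta_4,\beta_5,\beta_6$. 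Hence the contributions to the left-hand side of~\eqref{ccarleman3} coming from $\Omega\times(T/2,T)$ and $\mathcal{O}\times(T/2,T)$ are immediately bounded by $C K(\varphii,\psi,\zeta)$ and hence, via Lemma~\ref{carlemanteo1}, by the right-hand side of~\eqref{ccarleman3}.

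To handle the left half, pick a cut-off $\eta\in C^1([0,T])$ with $\eta\equiv 1$ on $[0,T/2]$, $\eta\equiv 0$ on $[3T/4,T]$, and $0\le\eta\le 1$. Multiply the three equations of~\eqref{adjoint} by $\eta^2\varphii$, $\eta^2\psi$ and $\eta^2\zeta$, integrate over $Q$, and sum. Integration by parts in the Laplacian terms produces $\int_0^T\eta^2(|\nabla\varphii|^2+|\nabla\psi|^2+|\nabla\zeta|^2)$ on the left; the pressure term disappears thanks to $\nabla\cdot\varphii=0$; the time-derivative terms, after one integration by parts in $t$, contribute $\tfrac12(\|\varphii(0)\|^2+\|\psi(0)\|^2+\|\zeta(0)\|^2)$ (boundary term at $t=0$, since $\eta(0)=1$) plus a remainder supported on $[T/2,3T/4]$ (where $\eta'\neq 0$). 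The convective terms $D\varphii\,\overline{\yvec}$ and $\overline{\yvec}\cdot\nabla\psi$, $\overline{\yvec}\cdot\nabla\zeta$ are controlled using $\|\overline{\yvec}\|_\infty$ plus Young, with $\epsilon\|\nabla\cdot\|^2$ absorbed into the diffusion; the coupling terms $\overline{\Gvec}\cdot\varphii$, $\overline{\Lvec}\cdot\varphii$, $\overline{g}_i\psi$, $\overline{l}_i\zeta$ are handled by Cauchy--Schwarz using the $L^\infty$ bounds in~\eqref{trajec}; and the two delicate cross-terms $\overline{\theta}\nabla\psi$ and $\overline{c}\nabla\zeta$ are split via Young as $\epsilon\|\eta\nabla\psi\|^2+C_\epsilon\|\overline{\theta}\|^2_\infty\|\eta\varphii\|^2$ (and analogously for $\overline{c}\nabla\zeta$), the first piece being absorbed into the diffusion generated by the $\psi$-equation and the second into the $\|\varphii\|^2$ terms on which Gronwall's lemma is applied. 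This produces the energy inequality
$$
\begin{aligned}
&\|\varphii(0)\|^2+\|\psi(0)\|^2+\|\zeta(0)\|^2+\int_0^{T/2}(|\nabla\varphii|^2+|\nabla\psi|^2+|\nabla\zeta|^2+|\varphii|^2+|\psi|^2+|\zeta|^2)\\
&\qquad\le C\int_{T/2}^{3T/4}(|\varphii|^2+|\psi|^2+|\zeta|^2)+C\int_0^{3T/4}(|\widetilde{\Gvec}|^2+|\widetilde{g}_1|^2+|\widetilde{g}_2|^2).
\end{aligned}
$$

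To close the argument, observe that on $[T/2,3T/4]$ the Carleman weight $s^3\lambda^4\xi^3e^{-2s\alpha}$ entering $K(\varphii,\psi,\zeta)$ is bounded below by a positive constant depending only on $T,\overline{\alpha},\widetilde{\alpha}$, so $\int_{T/2}^{3T/4}(|\varphii|^2+|\psi|^2+|\zeta|^2)\le CK(\varphii,\psi,\zeta)$; similarly, on $[0,3T/4]$ the weights $\beta_3^{-2},\beta_4^{-2}$ attached to the forcing terms in~\eqref{ccarleman3} are bounded below by positive constants, so $\int_0^{3T/4}(|\widetilde{\Gvec}|^2+|\widetilde{g}_1|^2+|\widetilde{g}_2|^2)$ is dominated by the right-hand side of~\eqref{ccarleman3}. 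Applying Lemma~\ref{carlemanteo1} and adding the right-half bound from the first paragraph yields~\eqref{ccarleman3}. The main technical obstacle is the treatment of the coupling terms $\overline{\theta}\nabla\psi$ and $\overline{c}\nabla\zeta$ in the $\varphii$-equation: since neither $\nabla\overline{\theta}$ nor $\nabla\overline{c}$ is assumed to lie in $L^\infty$, one cannot transfer the derivative by integration by parts and must instead absorb $\|\nabla\psi\|^2$, $\|\nabla\zeta\|^2$ into the diffusion produced by the equations for $\psi$ and $\zeta$, which is why the three energy identities must be summed \emph{before} applying Young's inequality.
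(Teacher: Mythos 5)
Your proposal is correct and is exactly the argument the paper intends: the paper states that Lemma~\ref{llema2} follows ``by combining Lemma~\ref{carlemanteo1} and the classical energy estimates satisfied by $\varphii$, $\psi$ and $\zeta$,'' which is precisely your split at $t=T/2$ (weights matching those of \eqref{carleman1}--\eqref{carleman2} on $[T/2,T]$, constant weights plus a cut-off energy/Gronwall estimate for the backward system on $[0,T/2]$, with the cross terms $\overline{\theta}\nabla\psi$, $\overline{c}\nabla\zeta$ absorbed after summing the three identities). The details you supply, including the verification that the intermediate quantities on $[T/2,3T/4]$ are controlled by $K(\varphii,\psi,\zeta)$ and by the right-hand side of \eqref{ccarleman3}, are sound.
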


	The next step is to prove the null controllability of the linear system \eqref{sislinear}.
	Of course, we will need some specific conditions on the data $\Svec$, $r_1$ and $r_2$.
	Thus, let us introduce the linear operators $M_i$, with
\begin{equation}\label{Ls}
	M_1(\uvec)= \uvec_t  - \Delta \uvec + (\uvec \cdot \nabla) \overline{\yvec} + ( \overline{\yvec}\cdot \nabla) \uvec, 
	\quad
	M_2(\phi)= \phi_t  - \Delta \phi  + \overline{\yvec}\cdot \nabla \phi  
\end{equation}
	and
\begin{equation*}
	M_3(z)= z_t  - \Delta z  +   \overline{\yvec}\cdot \nabla z
\end{equation*}
	and the spaces
\begin{eqnarray*}
	E_0 
		&=& \{\, (\uvec,\phi,z,\vvec,w_1,w_2): \beta_3\uvec \in \Lvec^2(Q),\ \beta_4\phi, \beta_4 z \in L^2(Q),                \\
		& & \ \beta_5\vvec1_{\mathcal{O}} \in \Lvec^2(Q),\  \beta_6w_11_{\mathcal{O}}, \beta_6w_21_{\mathcal{O}} \in L^2(Q),   \\
		& & \ v_1\equiv v_3\equiv0, \ \beta_1^{1/2}\uvec\in L^2(0,T;\Vvec)\cap L^{\infty}(0,T;\Hvec),                  \\
		& & \ \beta_1^{1/2}\phi,\ \beta_1^{1/2}z \in L^2(0,T;H^1_0(\Omega))\cap L^{\infty}(0,T;L^2(\Omega)) \,\}
\end{eqnarray*}
	and
\begin{eqnarray*}
	E_3 
		&=& \{\, (\uvec,q,\phi,z,\vvec,w_1,w_2): (\uvec,\phi,z,\mathbf{v},w_1,w_2) \in E_0,											   \\
		& &\ \beta_1^{1/2}\uvec \in L^4(0,T;\Lvec^{12}(\Omega)), \ q \in L^2_{loc}(Q),   											   \\
		& &\ \beta_1(M_1\uvec + \nabla q -\phi \evec_N - z\overrightarrow{\hvec} - \vvec1_{\mathcal{O}})\in L^2(0,T;\Wvec^{-1,6}(\Omega)),\\
		& &\ \beta_1(M_2\phi +\uvec\cdot  \nabla \overline{\theta}- w_11_{\mathcal{O}}) \in L^2(0,T;H^{-1}(\Omega)),                        \\
		& &\ \beta_1(M_3z+   \uvec\cdot \nabla\overline{c}- w_21_{\mathcal{O}}) \in L^2(0,T;H^{-1}(\Omega)) \,\}.
\end{eqnarray*}

	It is clear that $E_0$ and $E_3$ are Banach spaces for the norms $\|\cdot\|_{E_0}$ and $\|\cdot\|_{E_3}$, where
\begin{eqnarray*}
	 \|(\uvec,\phi,z,\vvec,w_1,w_2)\|_{E_0} & =& \bigl(\|\beta_3\uvec\|^2_{\Lvec^2} +\|\beta_4\phi\|^2_{L^2}+\|\beta_4 z\|^2_{L^2} \\
	& &~~ + \|\beta_5v_21_{\mathcal{O}}\|^2_{L^2} +\|\beta_6w_11_{\mathcal{O}}\|^2_{L^2}
	+\|\beta_6w_21_{\mathcal{O}}\|^2_{L^2}																							 \\
	& &~~ + \|\beta_1^{1/2}\uvec\|^2_{L^2(\Vvec)}+ \|\beta_1^{1/2}\uvec\|^2_{L^{\infty}(\Hvec)}                                 \\
	& &~~ + \|\beta_1^{1/2}\phi\|^2_{L^2(H^{1}_0)}+\|\beta_1^{1/2}\phi||^2_{L^{\infty}(L^2)}                    \\
	& &~~ + \|\beta_1^{1/2}z\|^2_{L^2(H^1_0)}+\|\beta_1^{1/2}z\|^2_{L^{\infty}(L^2)}\bigl)^{1/2}
\end{eqnarray*}
	and
\begin{eqnarray*}
	  \|(\uvec,q,\phi,z,\vvec,w_1,w_2)\|_{E_3} &=& \bigl(\|(\uvec,\phi,z,\vvec,w_1,w_2)\|^2_{E_0}
	   + \|\beta_1^{1/2}\uvec\|^2_{L^4(\Lvec^{12})}																			  \\
	& &~~ +\|\beta_1(M_1\uvec+\nabla q -\phi\evec_N - z\overrightarrow{\hvec}-\vvec1_{\mathcal{O}})\|^2_{L^2(\Wvec^{-1,6})}  \\
	& &~~ +\|\beta_1(M_2\phi +   \uvec\cdot \nabla\overline{\theta}- w_11_{\mathcal{O}})\|^2_{L^2(H^{-1})}                      \\
	& &~~ +\|\beta_1(M_3z+   \uvec\cdot \nabla\overline{c}- w_21_{\mathcal{O}})\|^2_{L^2(H^{-1})}\bigl)^{1/2}.
\end{eqnarray*}

\begin{proposition}\label{controlabilidadenula}
	Assume that $(\overline{\yvec},\overline{p},\overline{\theta},\overline{c})$ satisfies \eqref{trajectory}--\eqref{kappa}.
	Also, assume that $\uvec_0\in \Evec$, $\phi_0,\, z_0\in L^2(\Omega)$ and
\begin{equation*}
	\beta_1(\Svec,r_1,r_2)\in L^2(0,T;\Wvec^{-1,6}(\Omega)) \times L^2(0,T;H^{-1}(\Omega))\times L^2(0,T;H^{-1}(\Omega)).
\end{equation*}
	Then, there exist controls $\vvec$, $w_1$ and $w_2$ such that the associated solution to \eqref{sislinear} belongs to~$E_3$.
	In particular, $v_1\equiv v_3 \equiv 0$, $\uvec(\cdot,T)=\0vec$ and~$\phi(\cdot,T)=z(\cdot,T)=0$.
\end{proposition}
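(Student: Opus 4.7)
\noindent\textbf{Proof proposal for Proposition \ref{controlabilidadenula}.}

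The plan is to follow the by-now standard Fursikov--Imanuvilov penalization/duality scheme, with the crucial modification that the adjoint observability inequality is the one given by Lemma~\ref{llema2}. First I would introduce the linear space
\[
	P_0 = \{\,(\varphii,\pi,\psi,\zeta)\in C^2(\overline{Q})^N\times C^1(\overline{Q})\times C^2(\overline{Q})^2 :
	\nabla\cdot\varphii=0,\ \varphii|_\Sigma=\0vec,\ \psi|_\Sigma=\zeta|_\Sigma=0\,\},
\]
and, for elements of $P_0$, denote by $L^*\varphii$, $L_\psi^*\psi$, $L_\zeta^*\zeta$ the left-hand sides of the first, third and fourth equations in~\eqref{adjoint}. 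On $P_0$ define the bilinear form
\begin{eqnarray*}
	a((\varphii,\pi,\psi,\zeta),(\varphii',\pi',\psi',\zeta'))
	&=& \iint_Q \beta_3^{-2}\,L^*\varphii\cdot L^*\varphii'
	 + \iint_Q \beta_4^{-2}(L_\psi^*\psi\,L_\psi^*\psi' + L_\zeta^*\zeta\,L_\zeta^*\zeta')\\
	& & +\iint_{\mathcal{O}\times(0,T)}\!\bigl[\beta_5^{-2}\varphi_2\varphi_2' + \beta_6^{-2}(\psi\psi'+\zeta\zeta')\bigr].
\end{eqnarray*}
By Lemma~\ref{llema2} applied with $(\widetilde{\Gvec},\widetilde g_1,\widetilde g_2)=(L^*\varphii,L_\psi^*\psi,L_\zeta^*\zeta)$, $a(\cdot,\cdot)$ is a scalar product on $P_0$; let $P$ be its Hilbert completion.

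Next I would consider the linear functional
\[
	\ell(\varphii,\pi,\psi,\zeta)
	= \iint_Q (\Svec\cdot\varphii + r_1\psi + r_2\zeta)
	  + \int_\Omega \bigl(\uvec_0\cdot\varphii(\cdot,0) + \phi_0\psi(\cdot,0) + z_0\zeta(\cdot,0)\bigr).
\]
The $L^2$--$H^{-1}$ and trace contributions on the left-hand side of~\eqref{ccarleman3}, combined with the hypotheses $\uvec_0\in\Evec$, $\phi_0,z_0\in L^2(\Omega)$ and $\beta_1(\Svec,r_1,r_2)\in L^2(\Wvec^{-1,6})\times L^2(H^{-1})^2$, show that $\ell$ is continuous on $P$. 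By Riesz' theorem, there is a unique $(\widehat\varphii,\widehat\pi,\widehat\psi,\widehat\zeta)\in P$ with
$a(\,(\widehat\varphii,\widehat\pi,\widehat\psi,\widehat\zeta),\,\cdot\,)=\ell(\cdot)$. From this minimizer I would define
\[
	\widehat\uvec = \beta_3^{-2} L^*\widehat\varphii,\ \
	\widehat\phi = \beta_4^{-2} L_\psi^*\widehat\psi,\ \
	\widehat z = \beta_4^{-2} L_\zeta^*\widehat\zeta,
\]
and the controls $\widehat v_1\equiv\widehat v_3\equiv0$, $\widehat v_2 = -\beta_5^{-2}\widehat\varphi_2 1_{\mathcal O}$, $\widehat w_1 = -\beta_6^{-2}\widehat\psi 1_{\mathcal O}$, $\widehat w_2 = -\beta_6^{-2}\widehat\zeta 1_{\mathcal O}$. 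Testing the variational identity against elements of $P_0$ shows that the triple $(\widehat\uvec,\widehat\phi,\widehat z)$ together with a pressure $\widehat q$ (recovered via de Rham's lemma) solves~\eqref{sislinear}, and that $\|\beta_3\widehat\uvec\|_{L^2}^2+\|\beta_4\widehat\phi\|_{L^2}^2+\|\beta_4\widehat z\|_{L^2}^2 + \|\beta_5\widehat v_21_{\mathcal O}\|_{L^2}^2+\|\beta_6\widehat w_11_{\mathcal O}\|_{L^2}^2+\|\beta_6\widehat w_21_{\mathcal O}\|_{L^2}^2<+\infty$. Because the weights $\beta_i$ blow up as $t\to T^-$, the bounds $\beta_3\widehat\uvec,\beta_4\widehat\phi,\beta_4\widehat z\in L^2(Q)$ already force $\widehat\uvec(\cdot,T)=\0vec$ and $\widehat\phi(\cdot,T)=\widehat z(\cdot,T)=0$.

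It remains to upgrade the regularity so as to obtain $(\widehat\uvec,\widehat q,\widehat\phi,\widehat z,\widehat\vvec,\widehat w_1,\widehat w_2)\in E_3$. For this I would split $\beta_1^{1/2}\widehat\uvec$, $\beta_1^{1/2}\widehat\phi$ and $\beta_1^{1/2}\widehat z$ via changes of unknowns $\widetilde\uvec=\beta_1^{1/2}\widehat\uvec$ (etc.) and apply the parabolic regularity theory to the equations satisfied by $\widetilde\uvec$, $\widetilde\phi$, $\widetilde z$: the right-hand sides are $L^2(\Wvec^{-1,6})$ or $L^2(H^{-1})$ data with exponentially small weights, so the usual Stokes/heat estimates give $\widetilde\uvec\in L^2(\Vvec)\cap L^\infty(\Hvec)$, $\widetilde\phi,\widetilde z\in L^2(H^1_0)\cap L^\infty(L^2)$. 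An additional Sobolev interpolation (using $\Evec\hookrightarrow\Hvec$ in dimension $N=2,3$ and the Lions--Aubin trick) yields the space-time integrability $\widetilde\uvec\in L^4(\Lvec^{12})$ required by $E_3$. The definition of the weights $\beta_1,\beta_3,\beta_4$, chosen precisely so that $\beta_1\ll\beta_3,\beta_4$ with the correct polynomial exponents, ensures the compatibility of these bounds with those coming from the variational problem.

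The main obstacle I expect is the last step: reconciling the ``low'' weight $\beta_1$ (appearing in the $E_3$ norm) with the ``high'' weights $\beta_3,\beta_4,\beta_5,\beta_6$ that come out of the Carleman estimate, in particular verifying that $\widetilde\uvec\in L^4(0,T;\Lvec^{12})$ and that $\beta_1$ times each parabolic residual actually lies in the negative Sobolev space announced in the definition of $E_3$. This is a bookkeeping exercise on the polynomial-exponential factors in $\beta_i$ but requires some care because the velocity equation has a pressure that must be eliminated on solenoidal test fields, and because in dimension $N=3$ the space $\Evec=\Lvec^4\cap\Hvec$ is used (rather than just $\Hvec$) precisely to afford the $L^4(\Lvec^{12})$ bound needed later for the fixed-point argument on the full nonlinear system.
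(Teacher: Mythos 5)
Your overall strategy is exactly the paper's: a penalized extremal problem in the Fursikov--Imanuvilov style, a Hilbert space $P$ obtained by completing $P_0$ for a bilinear form whose coercivity is precisely Lemma~\ref{llema2}, Riesz/Lax--Milgram to solve the variational problem, reconstruction of the state and controls from the minimizer, and a final regularity step (which the paper delegates to \cite{F-I1}) to land in $E_3$.

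There is, however, one concrete slip that would make the argument fail as written: your residuals $L^*\varphii$, $L^*_\psi\psi$, $L^*_\zeta\zeta$ are the \emph{left-hand sides} of \eqref{adjoint} and therefore omit the cross-coupling terms. The data of \eqref{adjoint} associated with a quadruple $(\varphii,\pi,\psi,\zeta)$ are $\widetilde\Gvec = M_1^*(\varphii)+\nabla\pi-\overline\theta\nabla\psi-\overline c\nabla\zeta$, $\widetilde g_1 = M_2^*(\psi)-\varphii\cdot\evec_N$ and $\widetilde g_2 = M_3^*(\zeta)-\varphii\cdot\overrightarrow{\hvec}$, and it is these combinations that must enter the bilinear form (as in the paper's form $b$). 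Otherwise Lemma~\ref{llema2} is being invoked with the wrong right-hand sides, and --- more importantly --- the reconstructed state $\widehat\uvec=\beta_3^{-2}L^*\widehat\varphii$, $\widehat\phi=\beta_4^{-2}L^*_\psi\widehat\psi$, $\widehat z=\beta_4^{-2}L^*_\zeta\widehat\zeta$ would then solve a \emph{decoupled} system lacking the terms $\phi\evec_N+z\overrightarrow{\hvec}$, $\uvec\cdot\nabla\overline\theta$ and $\uvec\cdot\nabla\overline c$, i.e.\ not \eqref{sislinear}. Two smaller points: $P_0$ should carry a normalization such as $\int_{\mathcal O}\pi(x,t)\,dx=0$ so that the form is definite in the pressure variable; and since only $\beta_1(\Svec,r_1,r_2)\in L^2(0,T;\Wvec^{-1,6}(\Omega))\times L^2(0,T;H^{-1}(\Omega))^2$ is assumed, the terms $\iint_Q\Svec\cdot\varphii$, $\iint_Q r_1\psi$, $\iint_Q r_2\zeta$ in your functional $\ell$ must be written as duality pairings, whose continuity on $P$ uses exactly the $\beta_1^{-2}(|\nabla\varphii|^2+|\nabla\psi|^2+|\nabla\zeta|^2)$ term on the left of \eqref{ccarleman3}.
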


\begin{proof}
	We will follow the general method introduced and used in~\cite{F-I1} for linear parabolic scalar problems.
	
	Thus, let us introduce the auxiliary extremal problem
\begin{equation}\label{controleoptimo}
\left\{
\begin{array}{llr}
	\dis \hbox{Minimize }   \ J(\uvec,q,\phi,z,\vvec,w_1,w_2) 														\\ 
	\noalign{\smallskip}
	\dis \hbox{Subject to } \ \vvec\in \Lvec^2(Q),\ w_1,\ w_2\in L^2(Q), 											\\
	\dis \phantom{\hbox{Subject to }  \ } \text{supp}((\vvec,w_1,w_2)(\cdot,t)) \subset \mathcal{O},~\forall t\in(0,T),
	\\
	\dis \phantom{\hbox{Subject to }  \ }    v_1\equiv v_3\equiv0 \   \text{ and} 																\\ 
	\noalign{\smallskip}
\left\{
\begin{array}{lll}
    M_1(\uvec) + \nabla q = \Svec+ \vvec1_\mathcal{O} + \phi\evec_N + z\overrightarrow{\hvec} 		& \text{in} & Q,     \\
    \nabla \cdot \uvec = 0     																	& \text{in} & Q,     \\
    M_2(\phi)+   \uvec\cdot \nabla\overline{\theta}= r_1+ w_11_\mathcal{O}                                 & \text{in} & Q,     \\
    M_3(z)+   \uvec\cdot \nabla\overline{c}= r_2+ w_21_\mathcal{O}     							    & \text{in} & Q,     \\
    \uvec = \0vec, ~\phi = z = 0           															& \text{on} & \Sigma,\\
   \uvec(\cdot,0) = \uvec_0, \phi(\cdot,0)=\phi_{0}, z(\cdot,0)=z_0 													& \text{in} & \Omega.
\end{array}
\right.
\end{array}
\right.
\end{equation}
	Here, we have used the notation
$$
\begin{array}{l}
	\dis J(\uvec,q,\phi,z,\vvec,w_1,w_2) =   \frac{1}{2}\left\{\int\!\!\!\int_Q \left[ \beta_3^{2}|\uvec|^2
	                                       + \beta_4^{2}(|\phi|^2+|z|^2) \right]  \right.                                                 \\ 
	\noalign{\smallskip}
	\qquad \dis \left. + \int\!\!\!\int_{\mathcal{O}\times(0,T)} \left[ \beta_5^{2}|\vvec|^2+\beta_6^{2}(|w_1|^2+|w_2|^2) \right]\right\}.
\end{array}
$$

	Observe that a solution $(\widehat{\uvec},\widehat{q},\widehat{\phi},\widehat{z},\widehat{\vvec},\widehat{w}_1,\widehat{w}_2)$ to 
	\eqref{controleoptimo} is a good candidate to satisfy $ (\widehat{\uvec},\widehat{q},\widehat{\phi},\widehat{z},\widehat{\vvec},
	\widehat{w}_1,\widehat{w}_2)\in E_3$.

	Let us suppose for the moment that $(\widehat{\uvec},\widehat{q},\widehat{\phi},\widehat{z},\widehat{\vvec},\widehat{w_1},
	\widehat{w_2})$ solves \eqref{controleoptimo}. Then, in view of {\it Lagrange's multipliers theorem,} there must exist dual 
	variables $\widehat{\varphii}$, $\widehat{\pi}$, $\widehat{\psi}$ and $\widehat{\zeta}$ such that
\begin{equation}\label{lagrange}
\left\{
\begin{array}{ll}
	\widehat{\uvec}=\beta_3^{-2}( M_1^*(\widehat{\varphii})+\nabla \widehat{\pi}-\overline{\theta}\nabla\widehat{\psi}
	- \overline{c}\nabla\widehat{\zeta}), \ \ \ \nabla\cdot\widehat{\varphii}=0                          & \text{in } Q, \\ 
	\noalign{\smallskip}
	\widehat{\phi}= \beta_4^{-2}(M^*_2(\widehat{\psi})-\widehat{\varphii}\cdot \evec_N)                    & \text{in } Q, \\
	\noalign{\smallskip}
	\widehat{z}=\beta_4^{-2}( M_3^*(\widehat{\zeta})-\widehat{\varphii}\cdot\overrightarrow{\hvec})        & \text{in } Q, \\
	\noalign{\smallskip}
	\widehat{\vvec}=-\beta_5^{-2}(0,\varphi_2,0),\ \widehat{w}_1=-\beta_6^{-2}\widehat{\psi},
	\ \widehat{w}_2=-\beta_6^{-2}\widehat{\zeta},\ \ \ \ \          								   & \text{in } \mathcal{O}\times(0,T), \\
	\noalign{\smallskip}
	\widehat{\varphii}=\0vec,\ \widehat{\psi}=\widehat{\zeta}=0                                          & \text{on } \Sigma,
\end{array}
\right.
   \end{equation}
	where $M_i^*$ is the adjoint operator of $M_i$ $(i=1,2,3)$, i.e.,
$$
	M_1^*(\varphii)=-\varphii_t  - \Delta\varphii  -D\varphii\overline{\yvec}, \quad
	M_2^*(\psi)=-\psi_t  - \Delta \psi -\overline{\yvec} \cdot \nabla\psi
$$
	and
$$
	M_3^*(\zeta)=-\zeta_t  - \Delta \zeta -\overline{\yvec} \cdot \nabla\zeta.
$$

	Let us introduce the linear space
$$
\begin{array}{l}
	\dis P_0 = \bigg\{\, (\varphii,\pi,\psi,\zeta)\in \Cvec^{2}(\overline{Q}) : 
	\nabla\cdot\varphii=0\text{ in } Q,\  \varphii|_\Sigma = \0vec,       								\\
	\phantom{P_0 = \{\, } \dis \psi|_\Sigma=\zeta|_\Sigma=0,\ \int_{\mathcal{O}}\pi(x,t)\,dx=0 \quad\forall t\in(0,T)\,\bigg\},
\end{array}
$$
	the bilinear form $b(\cdot\,,\cdot) : P_0\times P_0 \to \mathbb{R}$ given by
\begin{eqnarray*}
	& & b((\widehat{\varphii},\widehat{\pi },\widehat{\psi},\widehat{\zeta}),(\varphii,\pi ,\psi,\zeta)) 								\\ 
	\noalign{\smallskip}
	& & \quad \dis := \int\!\!\!\int_Q \beta_3^{-2}( M_1^*(\widehat{\varphii})+\nabla \widehat{\pi }-\overline{\theta}\nabla\widehat{\psi}
	- \overline{c}\nabla\widehat{\zeta}) \cdot( M_1^*(\varphii)+\nabla \pi -\overline{\theta}\nabla\psi- \overline{c}\nabla\zeta)         \\ 
	\noalign{\smallskip}
	& & \quad \dis + \int\!\!\!\int_Q \beta_4^{-2}(M^*_2(\widehat{\psi})-\widehat{\varphii}\cdot \evec_N)(M^*_2(\psi)-\varphii\cdot \evec_N) \\
	\noalign{\smallskip}
	& & \quad \dis + \int\!\!\!\int_Q \beta_4^{-2}( M_3^*(\widehat{\zeta})-\widehat{\varphii}\cdot\overrightarrow{\hvec})
	( M_3^*(\zeta)-\varphii\cdot\overrightarrow{\hvec}) 																				\\ 
	\noalign{\smallskip}
	& & \quad \dis + \int\!\!\!\int_{\mathcal{O}\times(0,T)}(\beta_5^{-2}\widehat{\varphi}_2\cdot\varphi_2
	+\beta_6^{-2}\widehat{\psi}\,\psi+\beta_6^{-2}\widehat{\zeta}\,\zeta)
\end{eqnarray*}
	and the linear form $l_0 : P_0 \to \mathbb{R}$ defined by
\begin{eqnarray*}
	& & \dis \langle l_0,(\varphii,\pi,\psi,\zeta)\rangle := \int^T_0\langle \Svec,\varphii\rangle_{\Hvec^{-1},\mathbf{H}_0^1}\,dt
	+\int^T_0\langle r_1,\psi\rangle_{H^{-1},H_0^1}\,dt\nonumber \\ \noalign{\smallskip}
	& & \quad \dis + \int^T_0\langle r_2,\zeta\rangle_{H^{-1},H_0^1}\,dt+\int_{\Omega}\uvec_0\cdot\varphii(\cdot,0)\,dx \\ \noalign{\smallskip}
	& & \quad \dis + \int_{\Omega}\phi_0\,\psi(\cdot,0)\,dx+\int_{\Omega}z_0\,\zeta(\cdot,0)\,dx\nonumber.
   \end{eqnarray*}
  	Then we must have
   \begin{equation}\label{bilinear}
	b((\widehat{\varphii},\widehat{\pi},\widehat{\psi},\widehat{\zeta}),(\varphii,\pi,\psi,\zeta))
	= \langle l_0,(\varphii,\pi,\psi,\zeta)\rangle \quad \forall(\varphii,\pi,\psi,\zeta)\in P_0,
   	\end{equation}
	i.e.,~the solution to~\eqref{controleoptimo} satisfies \eqref{bilinear}.

   	Conversely, if we are able to solve (\ref{bilinear}) in some sense and then use (\ref{lagrange}) to define $(\widehat{\mathbf{u}},\widehat{q},\widehat{\phi},\widehat{z},\widehat{\mathbf{v}},\widehat{w_1},\widehat{w_2})$, 
	we will have probably found a solution to~(\ref{controleoptimo}).

   	It is clear that $b(\cdot\,,\cdot) : P_0\times P_0 \to \mathbb{R}$ is a symmetric, definite positive and bilinear form on~$P_0$, i.e.,~a scalar product in this linear space.
   	We will denote by $P$ the completion of $P_0$ for the norm induced by $b(\cdot\,,\cdot)$.
   	Then $P$ is a Hilbert space for $b(\cdot\,,\cdot)$.
   	On the other hand, in view of the Carleman estimate~(\ref{ccarleman3}), the linear form $(\varphii,\pi,\psi,\zeta)\mapsto \langle l_0,(\varphii,\pi,\psi,\zeta)\rangle$ is well-defined and continuous on $P$.
   	Hence, from {\it Lax-Milgram's lemma,} we deduce that the variational problem
\begin{equation}\label{depoisdelax}
	\left\{
		\begin{array}{l}
			b((\widehat{\varphii},\widehat{\pi},\widehat{\psi},\widehat{\zeta}),(\varphii,\pi,\psi,\zeta))=\langle l_0,(\varphii,\pi,\psi,\zeta)\rangle,\\ \noalign{\smallskip}
			\forall (\varphii,\pi,\psi,\zeta)\in P, \quad (\widehat{\varphii},\widehat{\pi},\widehat{\psi},\widehat{\zeta})\in P
		\end{array}
	\right.
\end{equation}
	possesses exactly one solution.

   	Let $(\widehat{\varphii},\widehat{\pi},\widehat{\psi},\widehat{\zeta})$ be the unique solution to~\eqref{depoisdelax} and let $\widehat{\mathbf{u}},\widehat{\phi},\widehat{z},\widehat{\mathbf{v}},\widehat{w}_1$ 
	and $\widehat{w}_2$ be given by (\ref{lagrange}). Then, it is readily seen that
\begin{equation}
	\int\!\!\!\int_Q[\beta^2_3|\widehat{\mathbf{u}}|^2+\beta^2_4(|\widehat{\phi}|^2+|\widehat{z}|^2)]+\int\!\!\!\int_{\mathcal{O}\times(0,T)}[\beta_5|\widehat{v}_2|^2+\beta^2_6(|\widehat{w}_1|^2
	+|\widehat{w}_2|^2)]<+\infty
\end{equation}
	and, also, that $(\widehat{\mathbf{u}},\widehat{\phi},\widehat{z})$, together with some $\widehat{q}$, is the unique weak solution
	(belonging to $L^2(0,T;\mathbf{V})\cap L^{\infty}(0,T;\mathbf{H})\times [L^2(0,T;H^1_0(\Omega))\cap L^{\infty}(0,T;L^2(\Omega))]^2$) to the system in 
	(\ref{controleoptimo}) for $\mathbf{v} =\widehat{\mathbf{v}}$, $w_1=\widehat{w}_1$ and $w_2=\widehat{w}_2$.

   	From the arguments in \cite{F-I1}, we also see that $(\widehat{\mathbf{u}},\widehat{q},\widehat{\phi},\widehat{z},\widehat{\mathbf{v}},\widehat{w_1},\widehat{w_2})\in E_3$ 
	and, consequently, the proof is achieved.
\end{proof}

\

   	We can now end the proof of Theorem~\ref{teo1}.

   	We will use the following inverse mapping theorem (see \cite{A-T}):

\begin{theorem} \label{teoremadecontrole}
   	Let $B$ and $G$ be two Banach spaces and let $ \mathcal{A} : B \mapsto G $ satisfying $ \mathcal{A} \in C^1(B;G)$.
   	Assume that $e_0 \in B$, $\mathcal{A}(e_0) = w_0 $ and $\mathcal{A}'(e_0):B \mapsto G $ is surjective.
   	Then there exists $ \delta > 0$ such that, for every $ w \in G$ satisfying $ || w - w_0||_G < \delta,  $ one can find a solution to the equation
$$
\mathcal{A} (e) = w, \ \ e \in B.
$$
\end{theorem}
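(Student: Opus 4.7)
My plan is to prove Theorem~\ref{teoremadecontrole} via a Newton-type iteration built from a bounded nonlinear right inverse of $A := \mathcal{A}'(e_0)$. After translating in both Banach spaces, I would assume $e_0 = 0$ and $w_0 = 0$; writing $\mathcal{A}(e) = A e + \phi(e)$, the $C^1$ regularity of $\mathcal{A}$ gives $\phi(0) = 0$ and $\phi'(0) = 0$, so $\phi$ is the ``genuinely nonlinear'' part whose derivative is small near the origin. The equation $\mathcal{A}(e) = w$ then reads $A e = w - \phi(e)$, which invites a fixed-point reformulation $e = R(w - \phi(e))$ as soon as a suitable right inverse $R$ of $A$ is available.

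The first step would be to produce this right inverse. Since $A : B \to G$ is a continuous \emph{surjective} linear map between Banach spaces, the open mapping theorem yields a constant $M > 0$ such that for every $g \in G$ there exists $b \in B$ with $A b = g$ and $\|b\|_B \leq M \|g\|_G$. Selecting one such $b$ for each $g$ gives a map $R : G \to B$, in general neither linear nor continuous, with $A R = I_G$ and $\|R(g)\|_B \leq M \|g\|_G$. Using $\mathcal{A} \in C^1$, I would then pick $\eta > 0$ such that $\|\mathcal{A}'(e) - A\|_{\mathcal{L}(B,G)} \leq \tfrac{1}{2M}$ on $\{\|e\|_B \leq \eta\}$, and set $\delta := \eta/(2M)$. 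For any $w \in G$ with $\|w\|_G \leq \delta$, the iterates
\begin{equation*}
x_0 = 0, \qquad x_{n+1} = x_n - R(\mathcal{A}(x_n) - w), \qquad n \geq 0,
\end{equation*}
furnish the desired candidate sequence.

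The core of the argument will be to show by induction that $\|x_n\|_B \leq \eta$ and $\|\mathcal{A}(x_n) - w\|_G \leq 2^{-n}\|w\|_G$ for all $n \geq 0$. Indeed, by construction $A(x_{n+1} - x_n) = -(\mathcal{A}(x_n) - w)$, so that
\begin{equation*}
\mathcal{A}(x_{n+1}) - w \;=\; \mathcal{A}(x_{n+1}) - \mathcal{A}(x_n) - A(x_{n+1} - x_n),
\end{equation*}
and the mean value inequality applied to $e \mapsto \mathcal{A}(e) - A e$ will bound this by $\tfrac{1}{2M}\|x_{n+1} - x_n\|_B \leq \tfrac{1}{2}\|\mathcal{A}(x_n) - w\|_G$. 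Combined with $\|x_{n+1} - x_n\|_B \leq M \cdot 2^{-n}\|w\|_G$ and a geometric sum, the hypothesis $\|x_n\|_B \leq 2M\|w\|_G \leq \eta$ is preserved, so the sequence $\{x_n\}$ is Cauchy in $B$, converges to some $x^\star$ with $\|x^\star\|_B \leq \eta$, and $\mathcal{A}(x^\star) = w$ by continuity. The delicate point, which I expect to be the main obstacle, is that $R$ cannot be taken linear or continuous in general, because surjectivity of $A$ alone does not guarantee that $\ker A$ is complemented in $B$; this rules out a naive Banach contraction argument on $B$ itself, and is precisely why the Newton scheme, which only exploits the one-sided estimate $\|R(g)\|_B \leq M \|g\|_G$ together with the smallness of $\mathcal{A}'(e) - A$ near the origin, is the natural route.
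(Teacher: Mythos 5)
Your proof is correct. Note first that the paper itself offers no proof of Theorem~\ref{teoremadecontrole}: it is quoted as a known inverse mapping theorem ``of the Liusternik kind'' with a pointer to the reference \cite{A-T}, so there is no in-paper argument to compare against. What you have written is essentially the classical Lyusternik--Graves argument, and it is complete: the open mapping theorem gives the (generally nonlinear, discontinuous) bounded right inverse $R$ with $AR=I_G$ and $\|R(g)\|_B\le M\|g\|_G$; the $C^1$ hypothesis gives a ball $\{\|e\|_B\le\eta\}$ on which $\|\mathcal{A}'(e)-A\|\le\tfrac{1}{2M}$; and the induction $\|\mathcal{A}(x_n)-w\|_G\le 2^{-n}\|w\|_G$, $\|x_{n+1}-x_n\|_B\le M2^{-n}\|w\|_G$, $\|x_n\|_B\le 2M\|w\|_G\le\eta$ closes because the mean value inequality for $e\mapsto\mathcal{A}(e)-Ae$ applies along the segment $[x_n,x_{n+1}]$, which lies in the convex ball of radius $\eta$ by the induction hypothesis. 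The limit $x^\star$ then solves $\mathcal{A}(x^\star)=w$ by continuity. Your closing remark is also the right diagnosis: since surjectivity of $A$ does not make $\ker A$ complemented, $R$ cannot be chosen linear or even continuous, so a direct contraction on $e\mapsto R(w-\phi(e))$ is unavailable and the Newton-type scheme, which only uses the one-sided bound on $R$ at each step, is the correct substitute. The only cosmetic slip is the sentence introducing the fixed-point reformulation $e=R(w-\phi(e))$, which you rightly abandon; it could simply be deleted.
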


   	We will apply this result with $B=E_3$, $G=G_1\times G_2$ and
$$
	\mathcal{A}(e) =(\mathcal{A}_1(\mathbf{u},q,\phi,z,\mathbf{v}),\mathcal{A}_2(\mathbf{u},\phi,w_1),\mathcal{A}_3(\mathbf{u},z,w_2), \mathbf{u}(\cdot,0), \phi(\cdot,0),z(\cdot,0))
$$
	for any $e = (\mathbf{u},q,\phi,z,\mathbf{v},w_1,w_2) \in E_3$, where
\begin{equation}
	\begin{array}{l}
		G_1=\beta_1^{-1}[L^2(0,T; \mathbf{W}^{-1,6}(\Omega))\times L^2(0,T; H^{-1}(\Omega))\times L^2(0,T; H^{-1}(\Omega))],\\
		G_2=\mathbf{E}\times L^2(\Omega)\times L^2(\Omega)
	\end{array}
\end{equation}
	and
\begin{equation}
	\begin{array}{l}
		\mathcal{A}_1(\mathbf{u},q,\phi,z,\mathbf{v})=M_1(\mathbf{u}) +(\mathbf{u} \cdot \nabla) \mathbf{u}+ \nabla q - \mathbf{v}1_\mathcal{O}-\phi \mathbf{e}_{N}-z\mathbf{\mathbf{h}},\\
		\mathcal{A}_2(\mathbf{u},\phi,w_1)=M_2(\phi) +   \uvec\cdot \nabla\overline{\theta}+\mathbf{u} \cdot \nabla\phi-  w_11_\mathcal{O}\\
		\mathcal{A}_3(\mathbf{u},z,w_2)=M_3(z)+ \uvec\cdot \nabla\overline{c}+\mathbf{u} \cdot \nabla z-  w_21_\mathcal{O},
	\end{array}
\end{equation}
	for all $(\mathbf{u},q,\phi,z,\mathbf{v},w_1,w_2) \in E_3$.

   	It is not difficult to check that $\mathcal{A}$ is bilinear and satisfies $\mathcal{A}\in C^1(B,G)$.
   	Let $e_0$ be the origin of~$B$. Notice that $\mathcal{A}'(\mathbf{0},0,0,0,\mathbf{0},0,0): B \mapsto G$ is the mapping that, to each $(\mathbf{u},q,\phi,z,\mathbf{v},w_1,w_2)\in B$, 
	associates the function in~$G$ whose components are
$$
	\begin{array}{l}
		\dis M_1(\mathbf{u}) \!+\! \nabla q  \!-\!  \mathbf{v}1_\mathcal{O}  \!-\! \phi \mathbf{e}_{N}  \!-\! z {\mathbf{h}},
		\\ M_2(\phi)  + \uvec\cdot \nabla\overline{\theta} \!-\!  w_11_\mathcal{O},
		\\ M_3(z) +  \uvec\cdot \nabla\overline{c}\!-\! w_21_\mathcal{O}
	\end{array}
$$
	and the initial values $\mathbf{u}(\cdot,0)$, $\phi(\cdot,0)$ and~$z(\cdot,0)$.
   	In view of the null controllability result for (\ref{sislinear}) given in Proposition \ref{controlabilidadenula},  $\mathcal{A}'(\mathbf{0},0,0,0,\mathbf{0},0,0)$ is surjective.

   	Consequently, we can indeed apply Theorem \ref{teoremadecontrole} with these data and, in particular, there exists $\delta >0$ such that, if
$$
	\left\|(\mathbf{0},0,0,\mathbf{u}_0,\phi_{0},z_0)\right\|_G=\left\|(\mathbf{u}_0,\phi_{0},z_0)\right\|_{\mathbf{E}\times L^2(\Omega)\times L^2(\Omega)} \leq \delta,
$$
	we can find controls $\mathbf{v}$, $w_1$ and $w_2$ with ${v}_2\equiv {v}_3\equiv0$ and associated solutions to~\eqref{sisnullcont} that satisfy $\mathbf{u}(\cdot,T) = \mathbf{0}$, $\phi(\cdot,T)= 0$ 
	and $z(\cdot,T)=0$.

   	This ends the proof of Theorem~\ref{teo1}.


\section{Proof of Theorem \ref{teo2}}\label{Sec4}

   	Again, it is not restrictive to assume that $N=3$. We will provisionally impose something stronger than~\eqref{f-case-2}:
\begin{equation}\label{35p}
	\exists \mathbf{k} \in {\mathbb {R}}^3, \ \exists a_0 > 0 \ \hbox{ such that } \ \det\left[ \,\overline{\mathbf{G}} \,|\, \overline{\mathbf{L}} \,|\, \mathbf{k} \,\right] \geq a_0 \ \hbox{ in } \ \mathcal{O}_* \times (0,T).
\end{equation}

 We will need a new different Carleman estimate, which is given in the following lemma:

\begin{lemma}\label{carlemanteo2}
   Assume that $(\overline{\mathbf{y}},\overline{p},\overline{\theta}, \overline{c})$ satisfies \eqref{trajectory}--\eqref{kappa} and~\eqref{35p} holds.
   There exist three positive constants $C$, $\overline{\alpha}$ and~$\tilde{\alpha}$, depending on $\Omega$, $\mathcal{O}$, $T$, $\overline{\mathbf{y}}$, $\overline{\theta}$ and $\overline{c}$ with $0<\tilde{\alpha}<\overline{\alpha}$ and $8\tilde{\alpha}-7\overline{\alpha}>0$ such that, for any $ (\varphii _T, \psi_T, \zeta_T) \in \mathbf{H}\times L^2(\Omega) \times L^2(\Omega) $ and any $(\widetilde{\mathbf{G}}, \widetilde{g}_1, \widetilde{g}_2) \in \mathbf{L}^2(Q)\times L^2(Q)\times L^2(Q)$, the solution to the adjoint system \eqref{adjoint} satisfies:
   \begin{equation}
\begin{array}{lll}\label{carleman3}
K(\varphii,\psi,\zeta)&\leq& C\left(\displaystyle\int\!\!\!\int_Qe^{\frac{-4\tilde{\alpha}+2\overline{\alpha}}{t^4(T-t)^4}}t^{-30}(T-t)^{-30}|\widetilde{\mathbf{G}}|^2)\right.\\ \noalign{\smallskip}
& &+\displaystyle\int\!\!\!\int_Qe^{\frac{-16\tilde{\alpha}+14\overline{\alpha}}{t^4(T-t)^4}}t^{-116}(T-t)^{-116}(|\widetilde{g}_1|^2+|\widetilde{g}_2|^2)\\ \noalign{\smallskip}
& &+\displaystyle\int\!\!\!\int_{\mathcal{O}\times(0,T)}e^{\frac{-8\tilde{\alpha}+6\overline{\alpha}}{t^4(T-t)^4}}t^{-64}(T-t)^{-64}|\varphi_1|^2  \\ \noalign{\smallskip}
& &+\displaystyle\left.
\int\!\!\!\int_{\mathcal{O}\times(0,T)}e^{\frac{-16\tilde{\alpha}+14\overline{\alpha}}{t^4(T-t)^4}}t^{-132}(T-t)^{-132}(|\psi|^2+|\zeta|^2) \right).
\end{array}
   \end{equation}
\end{lemma}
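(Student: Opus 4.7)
The plan is to mimic the proof of Lemma \ref{carlemanteo1} almost verbatim, the only change being algebraic: the pair $(\overline{\Gvec},\overline{\Lvec}) = (\evec_3,\vec{\hvec})$ with $h_1\neq 0$ of Lemma~\ref{carlemanteo1} is replaced by a general pair $(\overline{\Gvec},\overline{\Lvec})$ satisfying \eqref{35p}. As a starting point I would apply Proposition~\ref{Carleman} with the same choice of $s_1$, $\lambda_1$, $\overline{\alpha}$, $\widetilde{\alpha}$ and $C_1$ as at the beginning of the proof of Lemma~\ref{carlemanteo1}, which yields inequality \eqref{carleman2}. On its right-hand side appear integrals of $|\widetilde{\Gvec}|^2$, $|\widetilde{g}_1|^2$ and $|\widetilde{g}_2|^2$ over $Q$ together with a local integral of $|\varphii|^2+|\psi|^2+|\zeta|^2$ on $\om\times(0,T)$ for some fixed $\om\subset\subset\mathcal{O}_*$. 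Everything then boils down to bounding the $|\varphii|^2$ piece of this local integral by an integral of $|\varphi_1|^2$, modulo a small fraction of $K(\varphii,\psi,\zeta)$ and terms already admissible in \eqref{carleman3}.

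The algebraic step is where \eqref{35p} enters. By a permutation of the coordinates of $\mathbb{R}^3$ (which preserves the structure of \eqref{adjoint}, $\theta$ and $c$ being scalars), I may assume that $\mathbf{k}=\evec_1$. Thus $\{\overline{\Gvec},\overline{\Lvec},\evec_1\}$ is a basis of $\mathbb{R}^3$ at every point of $\mathcal{O}_*\times(0,T)$, with determinant $\geq a_0$ and with entries bounded in terms of $\|\overline{\Gvec}\|_\infty$ and $\|\overline{\Lvec}\|_\infty$. Its dual basis then has uniformly bounded entries, and decomposing $\varphii$ in this basis gives, pointwise on $\om\times(0,T)$,
\begin{equation*}
|\varphii|^2 \leq C\bigl(|\overline{\Gvec}\cdot\varphii|^2+|\overline{\Lvec}\cdot\varphii|^2+|\varphi_1|^2\bigr).
\end{equation*}
This reduces the task to estimating the two local weighted integrals of $|\overline{\Gvec}\cdot\varphii|^2$ and $|\overline{\Lvec}\cdot\varphii|^2$.

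For this last step I would repeat the computations \eqref{65}--\eqref{69}. From the third equation of \eqref{adjoint} one reads
\begin{equation*}
\overline{\Gvec}\cdot\varphii = -\psi_t - \Delta\psi - \overline{\yvec}\cdot\nabla\psi - \widetilde{g}_1 - \overline{g}_1\psi - \overline{g}_2\zeta,
\end{equation*}
and the analogue for $\overline{\Lvec}\cdot\varphii$ from the fourth equation. With the same weight $\beta(t)=e^{(-8\widetilde{\alpha}+6\overline{\alpha})/(t^4(T-t)^4)}t^{-64}(T-t)^{-64}$ of \eqref{eq:beta} and a cut-off $\vartheta\in C^2_0(\mathcal{O}_*)$ equal to $1$ on $\om$, I multiply by $\beta\vartheta(\overline{\Gvec}\cdot\varphii)$ (respectively $\beta\vartheta(\overline{\Lvec}\cdot\varphii)$), integrate by parts once in $t$ (time boundary terms vanish since $\beta(0)=\beta(T)=0$) and twice in $x$ (spatial boundary terms vanish because $\vartheta$ is compactly supported, and the transport term is treated via $\nabla\cdot\overline{\yvec}=0$), and apply Young's inequality to the source contributions. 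Each step produces a piece of the form $\varepsilon K(\varphii,\psi,\zeta)$ together with local weighted integrals of $|\psi|^2$ and $|\zeta|^2$ carrying the weight $e^{(-16\widetilde{\alpha}+14\overline{\alpha})/(t^4(T-t)^4)}t^{-132}(T-t)^{-132}$ and global weighted integrals of $|\widetilde{g}_1|^2$, $|\widetilde{g}_2|^2$ with weight $e^{(-16\widetilde{\alpha}+14\overline{\alpha})/(t^4(T-t)^4)}t^{-116}(T-t)^{-116}$—precisely the terms on the right-hand side of \eqref{carleman3}. Choosing $\varepsilon$ small enough to absorb $\varepsilon K(\varphii,\psi,\zeta)$ on the left then delivers \eqref{carleman3}. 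The main obstacle is the same bookkeeping one as in the previous lemma, namely verifying that every intermediate weight generated by the integrations by parts is dominated by those appearing in the statement; since the structural weights are unchanged, this check is identical to the one carried out in Lemma~\ref{carlemanteo1}, and the only genuinely new ingredient is the algebraic decomposition of $\varphii$ in the basis $\{\overline{\Gvec},\overline{\Lvec},\evec_1\}$.
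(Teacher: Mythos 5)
Your proposal is correct and follows essentially the same route as the paper: the same choice of $s_1$, $\lambda_1$, $\overline{\alpha}$, $\widetilde{\alpha}$ with $\omega\subset\subset\mathcal{O}_*$, the same pointwise bound $|\varphii|^2\leq C(|\overline{\Gvec}\cdot\varphii|^2+|\overline{\Lvec}\cdot\varphii|^2+|\varphi_1|^2)$ coming from \eqref{35p} with $\mathbf{k}=\evec_1$, and the same reduction to local integrals of $\overline{\Gvec}\cdot\varphii$ and $\overline{\Lvec}\cdot\varphii$ handled by the integration-by-parts scheme \eqref{65}--\eqref{69} via the $\psi$- and $\zeta$-equations. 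The only difference is that you spell out the final steps which the paper merely cites from Lemma~\ref{carlemanteo1}.
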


\begin{proof}
   As in the proof of Lemma~\ref{carlemanteo1}, by choosing
   \begin{equation*}
\overline{\alpha}=s_1(e^{5/4\lambda_1 m \|\eta^0\|_{\infty}}-e^{\lambda_1 m\|\eta^0\|_{\infty}}), \quad
\widetilde{\alpha}=s_1(e^{5/4\lambda_1 m \|\eta^0\|_{\infty}}-e^{\lambda_1(m+1)\|\eta^0\|_{\infty}}),
   \end{equation*}
   \begin{equation*}
C_1=\widehat{C}(1+T^2)s_1^{17}\lambda_1^{48}e^{17\lambda_1(m+1)\|\eta^0\|_{\infty}}
   \end{equation*}
and $\omega\subset\subset\mathcal{O}_*$, we see from~\eqref{10} that
   \begin{equation}
\begin{array}{lll}\label{carleman4}
&{\displaystyle\int\!\!\!\int_Qe^{\frac{-2\overline{\alpha}}{t^4(T-t)^4}}t^4(T-t)^4(|\varphii_t|^2+|\psi_{t}|^2+|\zeta_{t}|^2+|\Delta\varphii|^2+
|\Delta\psi|^2+|\Delta\zeta|^2)}\\ \noalign{\smallskip}
&{\displaystyle+\int\!\!\!\int_Qe^{\frac{-2\overline{\alpha}}{t^4(T-t)^4}}t^{-4}(T-t)^{-4}(|\nabla\varphii|^2+|\nabla\psi|^2+|\nabla\zeta|^2)}\\ \noalign{\smallskip}
&{\displaystyle+\int\!\!\!\int_Qe^{\frac{-2\overline{\alpha}}{t^4(T-t)^4}}t^{-12}(T-t)^{-12}(|\varphii|^2+|\psi|^2+|\zeta|^2)}\\ \noalign{\smallskip}
&{\displaystyle\leq C_1\left(\displaystyle\int\!\!\!\int_Qe^{\frac{-4\widetilde{\alpha}+2\overline{\alpha}}{t^4(T-t)^4}}t^{-30}(T-t)^{-30}(|\widetilde{\mathbf{G}}|^2+
|\widetilde{g}_1|^2+|\widetilde{g}_2|^2)\right.}\\ \noalign{\smallskip}
&{\displaystyle+\left.\int\!\!\!\int_{\omega\times(0,T)}e^{\frac{-8\widetilde{\alpha}+6\overline{\alpha}}{t^4(T-t)^4}}t^{-64}(T-t)^{-64}(|\varphii|^2+|\psi|^2+
|\zeta|^2)  \right)}.
\end{array}
   \end{equation}

   Notice that $0<\widetilde{\alpha}<\overline{\alpha}$.
   Moreover, taking $\lambda_1$ large enough, it can be assumed that $8\widetilde{\alpha}-7\overline{\alpha}>0$.

   Recall that $\mathbf{F}$ satisfies (\ref{35p}).
   Let us suppose that, for instance, $\mathbf{k}= \mathbf{e}_1$.
   Then we have:
   \begin{equation}\label{F}
|\varphi_1|^2+|\varphi_2|^2+|\varphi_3|^2\leq C_2 (|\overline{\mathbf{G}}\cdot\varphii|^2+|\overline{\mathbf{L}}\cdot\varphii|^2+|\varphi_1|^2) \ \ \hbox{in} \ \ \mathcal{O}_* \times (0,T)
   \end{equation}
for some $C_2 > 0$.
   Combining (\ref{carleman4}) and (\ref{F}), we thus see that the task is reduced to estimate the integrals
   \begin{equation}
\int\!\!\!\int_{\omega\times(0,T)}e^{\frac{-8\widetilde{\alpha}+6\overline{\alpha}}{t^4(T-t)^4}}t^{-64}(T-t)^{-64}|\overline{\mathbf{G}}\cdot\varphi|^2
   \end{equation}
and
   \begin{equation}
\int\!\!\!\int_{\omega\times(0,T)}e^{\frac{-8\widetilde{\alpha}+6\overline{\alpha}}{t^4(T-t)^4}}t^{-64}(T-t)^{-64}|\overline{\mathbf{L}}\cdot\varphi|^2
   \end{equation}
in terms of $\varepsilon K(\varphi,\psi,\zeta)$ and a constant $C_\varepsilon$ multiplying local integrals of $\psi$, $\zeta$, $\widetilde{g}_1$ and $\widetilde{g}_2$.

   These estimates can be obtained by following the final steps of Lemma~\ref{carlemanteo1};
   as a result, we obtain the inequality~(\ref{carleman3}).
\end{proof}

\

   Let us now give the proof of Theorem~\ref{teo2}.

   First, it is not restrictive to assume that we have \eqref{35p} instead of~\eqref{f-case-2}.
   Indeed, if~\eqref{f-case-2} holds, since $\overline{\mathbf{G}}$ and $\overline{\mathbf{L}}$ are continuous, there exist $\tau, a_0 > 0$, a non-empty open set $\omega \subset \subset \mathcal{O}_*$ and a vector $\mathbf{k} \in \mathbb{R}^3$ such that
   $$
\det\left[ \,\overline{\mathbf{G}} \,|\, \overline{\mathbf{L}} \,|\, \mathbf{k} \,\right] \geq a_0 \ \hbox{ in } \ \overline{\omega} \times [\tau,T-\tau].
   $$
   We can first take $\mathbf{v} \equiv \mathbf{0}$ and $w_1 \equiv w_2 \equiv 0$ for $t \in [0,\tau]$;
   then, we can try to get local exact controllability to $(\overline{\mathbf{y}},\overline{p},\overline{\theta}, \overline{c})$ at time $T-\tau$.
   If appropriate controls are found, they serve to prove Theorem~\ref{teo2}.

   Hence, we can assume that \eqref{35p} is satisfied.
   Arguing as in~Section~\ref{Sec3}, we can deduce from Lemma~\ref{carlemanteo2} the null controllability of the linearized system \eqref{sislinear} with controls like in Theorem~\ref{teo2}
   (that is, an analog of Proposition~\ref{controlabilidadenula});
   then, using again the inverse mapping theorem, we can easily achieve the proof of the desired result.


\section{Proof of Theorem \ref{teo3}}\label{Sec5}

	Without any lack of generality, we can assume that $h_1\neq 0$.
   The proof of our third main result, Theorem~\ref{teo3}, relies on a different and stronger Carleman estimate:

\begin{lemma}\label{carlemanteo3}
   Assume that $N=3$ and $(\overline{\mathbf{y}},\overline{p},\overline{\theta}, \overline{c})$ satisfies \eqref{trajectory}--\eqref{kappa}.
   Under the assumptions of Theorem~\emph{\ref{teo3}}, there exist three positive constants $C$, $\overline{\alpha}$ and~$\tilde{\alpha}$ depending on $\Omega$, $\mathcal{O}$, $T$, $\overline{\mathbf{y}}$, $\overline{\theta}$ and $\overline{c}$ satisfying $0<\tilde{\alpha}<\overline{\alpha}$ and $16\tilde{\alpha}-15\overline{\alpha}>0$ such that, for any $ (\varphi _T, \psi_T, \zeta_T) \in \mathbf{H}\times L^2(\Omega) \times L^2(\Omega) $ and any $(\widetilde{\mathbf{G}}, \widetilde{g}_1, \widetilde{g}_2) \in \mathbf{L}^2(Q)\times L^2(Q)\times L^2(Q)$, the solution to the adjoint system \eqref{adjoint} satisfies:

   \begin{equation}
\begin{array}{lll}\label{carleman5}
K(\varphii,\psi,\zeta)&\leq& C\left(
\displaystyle\int\!\!\!\int_Qe^{\frac{-4\tilde{\alpha}+2\overline{\alpha}}{t^4(T-t)^4}}t^{-30}(T-t)^{-30}
|\widetilde{\mathbf{G}}|^2\right.\\ \noalign{\smallskip}
& &+\displaystyle\int\!\!\!\int_Qe^{\frac{-32\tilde{\alpha}+30\overline{\alpha}}{t^4(T-t)^4}}t^{-252}(T-t)^{-252}
(|\widetilde{g}_1|^2+|\widetilde{g}_2|^2)\\ \noalign{\smallskip}
& &+\displaystyle\left.\int\!\!\!\int_{\mathcal{O}\times(0,T)}e^{\frac{-32\tilde{\alpha}+30\overline{\alpha}}{t^4(T-t)^4}}t^{-268}(T-t)^{-268}(|\psi|^2+|\zeta|^2)\right).
\end{array}
   \end{equation}
\end{lemma}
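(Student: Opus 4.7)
The plan is to derive Lemma~\ref{carlemanteo3} by strengthening Lemma~\ref{carlemanteo1}: I want to absorb its only remaining local observation of~$\varphii$, namely $\int\!\!\int_{\mathcal{O}\times(0,T)}\beta_5^{-2}|\varphi_2|^2$, into local observations of $\psi$ and $\zeta$ together with a small multiple of~$K(\varphii,\psi,\zeta)$. The geometric hypothesis \eqref{controldomain}--\eqref{controldomain-bis} is what makes this extra step possible: it puts a piece of the adjoint Dirichlet boundary data effectively at our disposal.

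Under the assumptions of Theorem~\ref{teo1} we have $f_1\equiv f_2\equiv 0$, $\overline{\Gvec}=\mathbf{e}_3$ and $\overline{\Lvec}=\vec{\hvec}$ in~\eqref{adjoint}, so the last two equations of the adjoint system can be solved algebraically for $\varphi_3$ and $\vec{\hvec}\cdot\varphii$:
\[
\varphi_3=-\psi_t-\Delta\psi-\overline{\yvec}\cdot\nabla\psi-\widetilde{g}_1,\qquad \vec{\hvec}\cdot\varphii=-\zeta_t-\Delta\zeta-\overline{\yvec}\cdot\nabla\zeta-\widetilde{g}_2.
\]
Inserted in the Carleman inequality \eqref{carleman2}, these identities dispose of the local integrals of $|\varphi_3|^2$ and $|\vec{\hvec}\cdot\varphii|^2$ exactly as in the proof of Lemma~\ref{carlemanteo1}, so the local integral of $|\varphi_2|^2$ is the only obstruction left.

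To remove it, I would first use \eqref{controldomain} to pick $\omega_1\subset\subset\omega$ with $\overline{\omega_1}\cap\partial\Om\subset B_a(\xvec^0)$. On this piece of boundary both $\varphii$ and $\psi,\zeta$ vanish (so do $\overline{\yvec}$ and all tangential derivatives), and differentiating the two algebraic identities in the normal direction and adjoining the trace of $\nabla\cdot\varphii=0$ (namely $\sum_j n_j\partial_\nvec\varphi_j=0$) yields a $3\times 3$ linear system for $(\partial_\nvec\varphi_1,\partial_\nvec\varphi_2,\partial_\nvec\varphi_3)$ whose right-hand side depends only on higher-order boundary traces of $\psi$, $\zeta$, $\widetilde g_1$ and~$\widetilde g_2$. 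After eliminating $\partial_\nvec\varphi_3$ the remaining $2\times 2$ block has determinant $h_1 n_2-h_2 n_1$, which is nonzero at $\xvec^0$ by~\eqref{controldomain-bis}, hence on an open neighbourhood by continuity; so $\partial_\nvec\varphi_1$ and $\partial_\nvec\varphi_2$ are explicitly expressible on a portion of $\omega_1\cap\partial\Om$ in terms of $\psi,\zeta$ and source data alone. I would then multiply the $\varphi_2$-component of the Stokes-like equation in~\eqref{adjoint} by $\varphi_2$ times an appropriate weight and a cut-off supported in $\omega_1\cup(B_a(\xvec^0)\cap\partial\Om)$ and integrate by parts over~$Q$. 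The surface contributions involving $\partial_\nvec\varphi_j$ are converted to $\psi,\zeta$ data via the previous step, the interior leftover terms are absorbed into $\varepsilon K(\varphii,\psi,\zeta)$ by Young's inequality, and the usual bookkeeping shows that the extra integration by parts precisely takes the exponential coefficient in the weight from $-16\tilde{\alpha}+14\overline{\alpha}$ to $-32\tilde{\alpha}+30\overline{\alpha}$, which is integrable as soon as $16\tilde{\alpha}-15\overline{\alpha}>0$, i.e.\ as soon as $\lambda_1$ is taken large enough (this is exactly the hypothesis announced in the statement). Plugging the resulting bound for $|\varphi_2|^2$ into~\eqref{carleman1} gives~\eqref{carleman5}.

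The main difficulty will be the pressure~$\pi$, which does not appear in any observation and cannot be estimated directly from the adjoint equations; every occurrence of it in the integrations by parts has to be removed either through a divergence-free test field or by converting its boundary trace to $\partial_\nvec\pi$ via the normal projection of the Stokes equation on $\partial\Om$ (which expresses $\partial_\nvec\pi$ in terms of $\partial_\nvec\psi$, $\partial_\nvec\zeta$ and~$\widetilde{\Gvec}\cdot\nvec$), and all of this without spoiling the weight structure of~$K(\varphii,\psi,\zeta)$. A secondary delicate point is the choice of $\omega_1$: it must be small enough for the invertibility provided by~\eqref{controldomain-bis} to be uniform on $\omega_1\cap\partial\Om$, which is guaranteed by the continuity of~$\nvec$ and the constancy of~$\vec{\hvec}$. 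Apart from these two points, the argument is of the same nature as the one already used for Lemma~\ref{carlemanteo1}.
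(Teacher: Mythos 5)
Your reduction of the problem to the local integral of $|\varphi_2|^2$ is right, and so is the treatment of $\varphi_3$ and $\hvec\cdot\varphii$ through the $\psi$ and $\zeta$ equations. But the mechanism you propose for the remaining term does not close, and it is precisely there that the new idea of this lemma lives. If you multiply the second component of the momentum equation in \eqref{adjoint} by $\beta\vartheta\varphi_2$ and integrate by parts, every boundary contribution you are counting on carries the trace of $\varphi_2$ itself, which vanishes on $\Sigma$; so the boundary information you extracted (the expressions for $\partial_\nvec\varphi_1,\partial_\nvec\varphi_2$ on $B_a(\xvec^0)\cap\partial\Om$ from the $2\times2$ system with determinant $h_1n_2-h_2n_1$) never enters the identity, and what survives controls $\int\!\!\!\int\beta\vartheta|\nabla\varphi_2|^2$ in terms of $\int\!\!\!\int\beta|\varphi_2|^2$ and the pressure --- the wrong direction entirely. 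More generally, knowing Cauchy data of $\varphii$ on a boundary patch does not, via an energy identity, bound the interior local $L^2$ norm of $\varphi_2$ by local data of $\psi$ and $\zeta$.

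The paper's argument is different and avoids the pressure altogether. The incompressibility condition is rewritten as the directional-derivative identity \eqref{chave}, $(-h_2,h_1,0)\cdot\nabla\varphi_2=-\partial_1(\hvec\cdot\varphii)+(h_3,0,-h_1)\cdot\nabla\varphi_3$. The hypotheses \eqref{controldomain}--\eqref{controldomain-bis} are then used geometrically, not algebraically: \eqref{controldomain-bis} says exactly that the direction $(-h_2,h_1,0)$ is transversal to $\partial\Om$ near $\xvec^0$ (its scalar product with $\nvec$ is $h_1n_2-h_2n_1$), and \eqref{controldomain} allows one to build $\omega\subset\mathcal{O}$ so that every $x\in\omega$ is joined to $\partial\Om$ by a segment $l(x)$ in that direction staying inside $\overline{\omega}$. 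Since $\varphii=\0vec$ on $\Sigma$, integrating \eqref{chave} along $l(x)$ yields the pointwise representation $\varphi_2(x,t)=\int_{l(x)}\left[\partial_1(\hvec\cdot\varphii)+(h_3,0,-h_1)\cdot\nabla\varphi_3\right](\overline{x},t)\,d\overline{x}$; H\"older and Fubini then bound $\int\!\!\!\int_{\omega\times(0,T)}\beta|\varphi_2|^2$ by local integrals of $|\partial_1(\hvec\cdot\varphii)|^2$ and $|\nabla\varphi_3|^2$, the derivatives are unloaded by integrations by parts against a cut-off (the resulting $|\Delta\varphii|^2$ terms being absorbed into the left-hand side), and finally the local integrals of $|\hvec\cdot\varphii|^2$ and $|\varphi_3|^2$ are converted into local data of $\zeta$ and $\psi$ exactly as in Lemma \ref{carlemanteo1}. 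This double conversion is what degrades the exponential weight to $-32\tilde{\alpha}+30\overline{\alpha}$ and forces the condition $16\tilde{\alpha}-15\overline{\alpha}>0$. You should replace your boundary-system step by this representation formula.
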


\begin{proof}
   Again, by choosing
   \begin{equation*}
\overline{\alpha}=s_1(e^{5/4\lambda_1 m \|\eta^0\|_{\infty}}-e^{\lambda_1 m\|\eta^0\|_{\infty}}), \quad
\widetilde{\alpha}=s_1(e^{5/4\lambda_1 m \|\eta^0\|_{\infty}}-e^{\lambda_1(m+1)\|\eta^0\|_{\infty}}),
   \end{equation*}
   \begin{equation*}
C_1=\widehat{C}(1+T^2)s_1^{17}\lambda_1^{48}e^{17\lambda_1(m+1)\|\eta^0\|_{\infty}}
   \end{equation*}
and $\omega\subset \mathcal{O}$, we obtain:
   \begin{equation}
\begin{array}{lll}\label{carleman6}
&{\displaystyle\int\!\!\!\int_Qe^{\frac{-2\overline{\alpha}}{t^4(T-t)^4}}t^4(T-t)^4(|\varphii_t|^2+|\psi_{t}|^2+|\zeta_{t}|^2+|\Delta\varphii|^2+
|\Delta\psi|^2+|\Delta\zeta|^2)}\\ \noalign{\smallskip}
&{\displaystyle\ \ \qquad+\int\!\!\!\int_Qe^{\frac{-2\overline{\alpha}}{t^4(T-t)^4}}t^{-4}(T-t)^{-4}(|\nabla\varphii|^2+|\nabla\psi|^2+|\nabla\zeta|^2)}\\ \noalign{\smallskip}
&{\displaystyle\ \ \qquad +\int\!\!\!\int_Qe^{\frac{-2\overline{\alpha}}{t^4(T-t)^4}}t^{-12}(T-t)^{-12}(|\varphii|^2+|\psi|^2+|\zeta|^2)}\\ \noalign{\smallskip}
&{\displaystyle\ \ \leq C_1\left(\displaystyle\int\!\!\!\int_Qe^{\frac{-4\widetilde{\alpha}+2\overline{\alpha}}{t^4(T-t)^4}}t^{-30}(T-t)^{-30}(|\widetilde{\mathbf{G}}|^2+
|\widetilde{g}_1|^2+|\widetilde{g}_2|^2)\right.}\\ \noalign{\smallskip}
&{\displaystyle\ \ \qquad+\left.\int\!\!\!\int_{\omega\times(0,T)}e^{\frac{-8\widetilde{\alpha}+6\overline{\alpha}}{t^4(T-t)^4}}t^{-64}(T-t)^{-64}(|\varphii|^2+|\psi|^2+ |\zeta|^2)  \right)}.
\end{array}
   \end{equation}

   We notice that $0<\widetilde{\alpha}<\overline{\alpha}$ and, by taking $\lambda_1$ large enough, it can be assumed that $16\widetilde{\alpha}-15\overline{\alpha}>0$.

   Using the incompressibility condition, we get
   \begin{equation}\label{chave}
(-\mathbf{h}_2,\mathbf{h}_1,0)\cdot\nabla \varphi_2= -\partial_1(\mathbf{h}\cdot\varphi)+(\mathbf{h}_3,0,-\mathbf{h}_1)\cdot\nabla\varphi_3.
   \end{equation}
   We will apply (\ref{carleman6}) for the open set $\omega$ defined as follows.
      By assumptions \eqref{controldomain} and \eqref {controldomain-bis}, we choose $\nu>0$ such that
   \begin{center}
$	h_1n_2(\xvec)-h_2n_1(\xvec)\neq 0 \ \ \forall x \in \Gamma_{\nu} := B_{\nu}(x^0)\cap\partial\mathcal{O}\cap\partial\Omega$.
   \end{center}
   
   Then, we introduce
   \begin{equation*}\label{61}
\omega := \{\, \overline{x}\in\Omega:\ \overline{x} = x + \tau (-\mathbf{h}_2,\mathbf{h}_1,0), \ x \in \Gamma_{\nu},\  |\tau|<\tau^0 \,\},
   \end{equation*}
with $\nu,\tau^0>0$  small enough, so that we still have
   \begin{equation*}\label{62}
\omega\subset\mathcal{O}\text{ and } d:= \hbox{dist} (\overline{\omega},\partial\mathcal{O}\cap\Omega)>0.
   \end{equation*}

   Observe that, with this choice, each point $x_*\in\omega$ has the property that one of the point at which the straight line $\{\, x_* + r(-\mathbf{h}_2,\mathbf{h}_1,0) : r \in \mathbb{R} \,\}$ intersects $\partial\Omega$ belongs to $\partial\omega$.

   Once $\omega$ is defined, we apply the inequality (\ref{carleman6}) in this open set and we try to bound the term
   \begin{center}
${\displaystyle\int\!\!\!\int_{\omega\times(0,T)}e^{\frac{-8\widetilde{\alpha}+6\overline{\alpha}}{t^4(T-t)^4}}t^{-64}(T-t)^{-64}|\varphi_2|^2 }$
   \end{center}
in terms of $\varepsilon K(\varphi,\psi,\zeta)$ and local integrals of $\mathbf{h}\cdot\varphi$ and $\varphi_3$.

   To this end, for each $x\in\omega$ we denote by $l(x)$
   (resp.~$\tilde{l}(x)$) the segment that starts from $x$ with direction $(-\mathbf{h}_2,\mathbf{h}_1,0)$ in the positive
   (resp.~negative) sense and ends at $\partial\omega$.
   Then, since $\varphii$ verifies (\ref{chave}) and $\varphii = \mathbf{0}$ on $\Sigma$, it is not difficult to see that
   \begin{equation*}
\varphi_2(x,t)=\int_{l(x)}\left[\partial_1(\mathbf{h}\cdot\varphi)+(\mathbf{h}_3,0,-\mathbf{h}_1)\cdot\nabla\varphi_3\right](\overline{x},t) \,d\overline{x},
   \end{equation*}
   for all $(x,t) \in \omega \times (0,T)$.
   Applying at this point H\"older's inequality and Fubini's formula, we obtain:
   \begin{eqnarray}\label{fubini}
&& \dis \int\!\!\!\int_{\omega\times(0,T)}\beta(t)|\varphi_2|^2  \\ \noalign{\smallskip}
&& \quad \dis \leq C\int_0^T\beta\left(\!\int_{\omega} \int_{l(x)}(
|\partial_1(\mathbf{h}\cdot\varphi)|^2
+|(\mathbf{h}_3,0,-\mathbf{h}_1)\cdot\nabla\varphi_3|^2)\,d\overline{x}\,dx\right)\,dt\nonumber\\ \noalign{\smallskip}
&& \quad \dis = C\int\!\!\!\int_{\omega\times(0,T)}\beta\left[|\partial_1(\mathbf{h}\cdot\varphi)|^2
+|(\mathbf{h}_3,0,-\mathbf{h}_1)\cdot\nabla\varphi_3|^2\right]
\left(\int_{\tilde{l}(\overline{x},t)} \,dx\right) \,d\overline{x}\,dt\nonumber\\ \noalign{\smallskip}
&& \quad \dis \leq C\int\!\!\!\int_{\omega\times(0,T)}\beta\left[|\partial_1(\mathbf{h}\cdot\varphi)|^2+|(\mathbf{h}_3,0,-\mathbf{h}_1)\cdot\nabla\varphi_3|^2\right],\nonumber
   \end{eqnarray}
   recall that $\beta$ is defined in \eqref{eq:beta}.

   Then, let us introduce an appropriate non-empty open set $\omega_{0}$ verifying $\omega\subset\omega_{0}\subset\mathcal{O}$, $d_1:=\hbox{dist}(\overline{\omega_{0}},\partial\mathcal{O}\cap\Omega)>0$ and $d_2:=\hbox{dist}(\overline{\omega},\partial\omega_{0}\cap\Omega)>0$ and a cut-off function $\vartheta_0\in C^2(\overline{\omega}_{0})$ such that
   \begin{center}
$\vartheta_0\equiv1\text{ in }\omega,\ 0\leq\vartheta_0\leq1$ and \\
$\vartheta_0(x)=0$ whenever $x\in\omega_{0}$ and $\hbox{dist}(x,\partial\omega_{0}\cap\Omega)\leq d_2/2$.
   \end{center}
   In particular, $\vartheta_0$ and its derivatives vanish on~$\partial\omega_{0}\cap\Omega$.
   This and the fact that $\varphii = \mathbf{0}$ on~$\Sigma$ imply:
   \begin{eqnarray}\label{A1}
& & \int\!\!\!\int_{\omega\times(0,T)}\beta|\partial_1(\mathbf{h}\cdot\varphi)|^2\leq \int\!\!\!\int_{\omega_0\times(0,T)}\vartheta_0\beta|\partial_1(\mathbf{h}\cdot\varphi)|^2 \nonumber \\ \noalign{\smallskip}
& & \ \ = \int\!\!\!\int_{\omega_0\times(0,T)}\beta\left[\frac{1}{2}\partial_1(\vartheta_0\partial_1|(\mathbf{h}\cdot\varphi)|^2)-\vartheta_0\partial_{11}(\mathbf{h}\cdot\varphi)(\mathbf{h}\cdot\varphi) \right. \\ \noalign{\smallskip}
& & \ \ \qquad\qquad \left. -\frac{1}{2}\partial_1(\partial_1\vartheta_0|(\mathbf{h}\cdot\varphi)|^2)+\frac{1}{2}\partial_{11}\vartheta_0|(\mathbf{h}\cdot\varphi)|^2\right] \nonumber \\ \noalign{\smallskip}
& & \ \ \leq C\int\!\!\!\int_{\omega_0\times(0,T)}\left[\beta|(\mathbf{h}\cdot\varphi)|^2+\beta |\partial_{11}(\mathbf{h}\cdot\varphi)(\mathbf{h}\cdot\varphi)| \right] \nonumber
   \end{eqnarray}
and
   \begin{eqnarray}\label{A2}
& &\int\!\!\!\int_{\omega\times(0,T)}\beta|(\mathbf{h}_3,0,-\mathbf{h}_1)\cdot\nabla\varphi_3|^2\leq C\int\!\!\!\int_{\omega_0\times(0,T)}\vartheta_0\beta|\nabla\varphi_3|^2 \nonumber \\ \noalign{\smallskip}
& & \ \ =C\sum^N_{j=1}\int\!\!\!\int_{\omega_0\times(0,T)}\beta\left[\frac{1}{2}\partial_j(\vartheta_0\partial_j|\varphi_3|^2)-\vartheta_0\partial_{jj}(\varphi_3)\varphi_3 \right. \\  \noalign{\smallskip}
& &  \ \ \qquad\qquad
- \left. \frac{1}{2}\partial_j(\partial_j\vartheta_0|\varphi_3|^2)+\frac{1}{2}\partial_{jj}\vartheta_0|\varphi_3|^2\right]\nonumber \\ \noalign{\smallskip}
&& \ \ \leq C\int\!\!\!\int_{\omega_0\times(0,T)}\left[\beta|\varphi_3|^2+\beta
\Delta\varphi_3\varphi_3\right] . \nonumber
   \end{eqnarray}

   Finally, in view of Young's inequality and classical Sobolev estimates, we see that
   \begin{eqnarray}\label{A1p}
&&\int\!\!\!\int_{\omega\times(0,T)}\beta|\partial_1(\mathbf{h}\cdot\varphi)|^2 \leq C\int\!\!\!\int_{\omega_0\times(0,T)}\beta|(\mathbf{h}\cdot\varphi)|^2\nonumber\\ \noalign{\smallskip}
& & \ \ \qquad +\frac{1}{4C}\int\!\!\!\int_{\omega_0\times(0,T)}
e^{\frac{-2\overline{\alpha}}{t^4(T-t)^4}}t^{4}(T-t)^{4}|\partial_{11}\varphi|^2\nonumber\\ \noalign{\smallskip}
& & \ \ \qquad +C\int\!\!\!\int_{\omega_0\times(0,T)}e^{\frac{-16\widetilde{\alpha} +
14\overline{\alpha}}{t^4(T-t)^4}}t^{-132}(T-t)^{-132}|(\mathbf{h}\cdot\varphi)|^2\\ \noalign{\smallskip}
& & \ \ \leq 2C\int\!\!\!\int_{\omega_0\times(0,T)}e^{\frac{-16\widetilde{\alpha}
+14\overline{\alpha}}{t^4(T-t)^4}}t^{-132}(T-t)^{-132}|(\mathbf{h}\cdot\varphi)|^2\nonumber\\ \noalign{\smallskip}
& & \ \ \qquad + \frac{1}{4C} \int\!\!\!\int_Qe^{\frac{-2\overline{\alpha}}
{t^4(T-t)^4}}t^{4}(T-t)^{4}|\Delta\varphi|^2\nonumber
   \end{eqnarray}
and
   \begin{eqnarray}\label{A2p}
&&\int\!\!\!\int_{\omega\times(0,T)}\beta|(\mathbf{h}_3,0,-\mathbf{h}_1)\cdot\nabla\varphi_3|^2\nonumber\\ \noalign{\smallskip}
& & \ \ \leq\left[ \frac{1}{4C}\int\!\!\!\int_{\omega_0\times(0,T)}
e^{\frac{-2\overline{\alpha}}{t^4(T-t)^4}}t^{4}(T-t)^{4}|\Delta\varphi_3|^2\right.
\nonumber\\ \noalign{\smallskip}
& & \ \ \qquad + C\int\!\!\!\int_{\omega_0\times(0,T)}e^{\frac{-16\widetilde{\alpha}+14\overline{\alpha}}{t^4(T-t)^4}}t^{-132}(T-t)^{-132}|\varphi_3|^2\nonumber\\ \noalign{\smallskip}
& & \ \ \qquad \left.+ C\int\!\!\!\int_{\omega_0\times(0,T)}\beta|\varphi_3|^2\right]\\ \noalign{\smallskip}
& & \ \ \leq 2C\int\!\!\!\int_{\omega_0\times(0,T)}e^{\frac{-16\widetilde{\alpha}
+14\overline{\alpha}}{t^4(T-t)^4}}t^{-132}(T-t)^{-132}|\varphi_3|^2\nonumber\\ \noalign{\smallskip}
& & \ \ \qquad + \frac{1}{4C} \int\!\!\!\int_Qe^{\frac{-2\overline{\alpha}}
{t^4(T-t)^4}}t^{4}(T-t)^{4}|\Delta\varphi|^2\nonumber
   \end{eqnarray}

   Therefore, combining (\ref{carleman6}), (\ref{fubini}), \eqref{h}, (\ref{A1p}) and (\ref{A2p}), we obtain
   \begin{eqnarray}\label{carleman6p}
&&{\displaystyle\int\!\!\!\int_Qe^{\frac{-2\overline{\alpha}}{t^4(T-t)^4}}t^4(T-t)^4(|\varphi_t|^2+|\psi_{t}|^2+|\zeta_{t}|^2+|\Delta\varphi|^2+
|\Delta\psi|^2+|\Delta\zeta|^2)}\nonumber\\ \noalign{\smallskip}
&&{\displaystyle \qquad+\int\!\!\!\int_Qe^{\frac{-2\overline{\alpha}}{t^4(T-t)^4}}t^{-4}(T-t)^{-4}(|\nabla\varphi|^2+|\nabla\psi|^2+|\nabla\zeta|^2)}\nonumber\\ \noalign{\smallskip}
&&{\displaystyle\qquad +\int\!\!\!\int_Qe^{\frac{-2\overline{\alpha}}{t^4(T-t)^4}}t^{-12}(T-t)^{-12}(|\varphi|^2+|\psi|^2+|\zeta|^2)}\\ \noalign{\smallskip}
&&{\displaystyle\leq C\left(\displaystyle\int\!\!\!\int_Qe^{\frac{-4\widetilde{\alpha}+2\overline{\alpha}}{t^4(T-t)^4}}t^{-30}(T-t)^{-30}(|\widetilde{\mathbf{G}}|^2+
|\widetilde{g}_1|^2+|\widetilde{g}_2|^2)\right.}\nonumber\\ \noalign{\smallskip}
&&{\displaystyle\qquad +\int\!\!\!\int_{\omega_0\times(0,T)}e^{\frac{-16\widetilde{\alpha}+14\overline{\alpha}}{t^4(T-t)^4}}t^{-132}(T-t)^{-132}
\left[|(\mathbf{h}\cdot\varphi)|^2+|\varphi_3|^2\right]}\nonumber\\ \noalign{\smallskip}
&&{\displaystyle\qquad \left.+\int\!\!\!\int_{\omega\times(0,T)}e^{\frac{-8\widetilde{\alpha}+6\overline{\alpha}}{t^4(T-t)^4}}t^{-64}(T-t)^{-64}(|\psi|^2+
|\zeta|^2)  \right)}\nonumber.
   \end{eqnarray}

   Once more, our task is reduced to estimate the integrals
   \begin{equation}
\int\!\!\!\int_{\omega_0\times(0,T)}e^{\frac{-16\widetilde{\alpha}+14\overline{\alpha}}{t^4(T-t)^4}}t^{-132}(T-t)^{-132}|(\mathbf{h}\cdot\varphi)|^2
   \end{equation}
and
   \begin{equation}
\int\!\!\!\int_{\omega_0\times(0,T)}e^{\frac{-16\widetilde{\alpha}+14\overline{\alpha}}{t^4(T-t)^4}}t^{-132}(T-t)^{-132}|\varphi_3|^2
   \end{equation}
in terms of $\varepsilon K(\varphi,\psi,\zeta)$ and local integrals of $\psi$, $\zeta$, $\widetilde{g}_1$ and $\widetilde{g}_2$.

   To this end, we can again follow the steps of Lemma~\ref{carlemanteo1};
   after some work, we are led to~(\ref{carleman5}).
\end{proof}


\section{Final comments and questions}\label{Sec6}


\subsection{The case $N=2$}\label{SSec6.1}

   We see from Theorems~\ref{teo1} and~\ref{teo2} that, for $N=2$, even without imposing geometrical hypotheses to $\mathcal{O}$ like (\ref{controldomain}) the local exact controllability to the trajectories holds with two scalar controls $w_1$ and $w_2$.
   In other words, in this case, we only have to act on the PDEs satisfied by $\theta$ and $c$
   (no purely mechanical action is needed).

   A natural question is thus whether Theorem~\ref{teo2} can be improved
   (in the sense that the whole system can be controlled with just one scalar control by imposing (\ref{controldomain}) or any other condition.


\subsection{Nonlinear $\mathbf{F}$ and geometrical conditions on $\mathcal{O}$}\label{SSec6.2}

   In Theorem~\ref{teo3}, we have assumed that $\mathbf{F}$ depends linearly on $\theta$ and $c$.
   This allowed to use the incompressibility condition
   (written in the form \eqref{chave}) and, after several integrations by parts and estimates, led to~\eqref{carleman6p}.

   It is thus reasonable to ask whether a similar result holds for more general functions $\mathbf{F}$ satisfying \eqref{f-case-2} and maybe other conditions.
   But this is to our knowledge an open question.


\subsection{Generalizations to coupled systems with more unknowns}\label{SSec6.3}

   The results in this paper admit several straightforward generalizations.
   For instance, let us assume that $N = 3$.
   With suitable hypotheses, we can obtain a result similar Theorem~\ref{teo2} for the following system in~$Q$
   \begin{equation*}
\left\{
\begin{array}{l}
     \mathbf{y}_t  - \Delta \mathbf{y} +(\mathbf{y} \cdot \nabla) \mathbf{y}+ \nabla p
     = \mathbf{v}1_\mathcal{O}+\mathbf{F}(\theta,c^1,c^2),  \\
     \nabla \cdot \mathbf{y} = 0, \\
     \left(
       \begin{array}{c}
         \theta \\
         c^1 \\
         c^2\\
       \end{array}
     \right)_t-\left(
                 \begin{array}{c}
                          \widetilde{a}\Delta\theta \\
         \widetilde{a}^1\Delta c^1 \\
         \widetilde{a}^2\Delta c^2 \\
                 \end{array}
               \right)+
               \mathbf{y}\cdot\nabla\left(
                 \begin{array}{c}
                          \theta \\
         c^1\\
         c^2 \\
                 \end{array}
               \right)=\left(
                 \begin{array}{c}
                          f(\theta,c^1,c^2) \\
         f^1(\theta,c^1,c^2) \\
         f^2(\theta,c^1,c^2) \\
                 \end{array}
               \right)+
               \left(
                 \begin{array}{c}
                          w1_\mathcal{O} \\
         w^11_\mathcal{O} \\
         w^21_\mathcal{O}\\
                 \end{array}
               \right),
\end{array}
\right.
   \end{equation*}
completed with homogeneous Dirichlet boundary conditions and initial conditions at $t=0$.

   This means that the whole system can be controlled, at least locally, by acting on the PDEs satisfied by $\theta$, $c^1$ and $c^2$, but not on the motion equation.

   Nevertheless, it is unknown whether this can be improved and local controllability can also hold, under some specific assumptions, with at most two scalar controls.


\subsection{Local null controllability without geometrical hypotheses}\label{SSec6.4}

   Let us come back to Theorem~\ref{teo1}.
   Suppose that $(\overline{\mathbf{y}},\overline{p},\overline{\theta}, \overline{c}) \equiv \mathbf{0}$ and let us try to prove a local null controllability result with $L^2$ controls $\mathbf{v} \equiv \mathbf{0}$, $w_1$ and $w_2$, without any assumption on~$\mathcal{O}$.

   Arguing as in~Section~\ref{Sec3}, we readily see that the task is reduced to the proof of a Carleman inequality for the solutions to~\eqref{adjoint} with only local integrals of $\psi$ and $\zeta$ in the right hand side.

   But this inequality is true.
   Indeed, with a self-explained notation, the following holds:

\begin{itemize}

\item [a)]  $\displaystyle \widetilde{I}(s,\lambda;\varphii) \leq C\int\!\!\!\int_{\mathcal{O}\times(0,T)}\rho_1^{-2}(|\varphii\cdot \overrightarrow{\hvec}|^2+|\varphi_3|^2) + \dots$\\
   (from the results in~\cite{Controle sem hipostese sobre O};
   here and below, the dots contain weighted integrals of $|\widetilde{\mathbf{G}}|^2$ and $|\widetilde{g}_1|^2+|\widetilde{g}_2|^2$).

\item [b)]  $\displaystyle K(\psi,\zeta)  \leq  \varepsilon \widetilde{I}(s,\lambda;\varphii) + C \int\!\!\!\int_{\mathcal{O}\times(0,T)}\rho_2^{-2}(|\psi|^2+|\zeta|^2)  + \dots$\\
   (from the usual Carleman estimates for the heat equation).

\end{itemize}

   Using in a) the arguments in the final part of the proof of Lemma~\ref{carlemanteo1}, we obtain an estimate of the form
   $$
\displaystyle \widetilde{I}(s,\lambda;\varphii) \leq \varepsilon \widetilde{I}(s,\lambda;\varphii) + C_\varepsilon \int\!\!\!\int_{\mathcal{O}\times(0,T)}(\rho_3^{-2}|\psi|^2+\rho_4^{-2}|\zeta|^2) + \dots
   $$
   Then, after addition, we find:
   $$
\displaystyle \widetilde{I}(s,\lambda;\varphii) +K(\psi,\zeta) \leq C \int\!\!\!\int_{\mathcal{O}\times(0,T)}\rho^{-2}(|\psi|^2+|\zeta|^2) + \dots,
   $$
which easily leads to the desired estimates.








\section{Appendix}\label{Appendix}

   Let us now present a sketch of the proof of Proposition~\ref{Carleman}.

\

\noindent
$\bullet$ {\sc First estimates:}

\

   In view of the usual Carleman estimates for the heat equation, we easily obtain
   \begin{equation}
   \begin{alignedat}{2}\label{estimate1}
   K(\varphii,\psi,\zeta)\leq&~C\left(\iint_{Q}e^{-2s\alpha}|\nabla\pi|^2dx\,dt\right.\\
   &
   + \iint_{Q}e^{-2s\alpha}(|\mathbf{\widetilde{G}}|^2 + |\widetilde{g}_1|^2  + |\widetilde{g}_2|^2)dx\,dt \\ \noalign{\smallskip}
    &+ \left.s^3\lambda^4\iint_{\mathcal{O}\times(0,T)}e^{-2s\alpha}\xi^3|(\varphii|^2 + | \psi|^2 + | \zeta|^2)\,
    dx\,dt\right)
   \end{alignedat}
   \end{equation}
for all $s\geq s_0(T^7 + T^8)$ and
    \begin{eqnarray*}
\lambda &\geq &\hat{\lambda}\bigl(1+||\overline{\mathbf{y}}||_{\infty} + ||\overline{\theta}||_{\infty}+||\overline{c}||_{\infty}+||\mathbf{\overline{G}}||^{1/2}_{\infty}
+ ||\mathbf{\overline{L}}||^{1/2}_{\infty}\bigr. \\
 & & \bigl. + ||\overline{g}_1||^{1/2}_{\infty}  + ||\overline{g}_2||^{1/2}_{\infty} + ||\overline{l}_1||^{1/2}_{\infty}
+ ||\overline{l}_2||^{1/2}_{\infty} \bigr).
   \end{eqnarray*}

\

\noindent
$\bullet$ {\sc Eliminating the global integral of $\nabla\pi$:}

\

   Let us look at the
   (weak) equation satisfied by the pressure, which can be found by applying the divergence operator to the motion equation of~(\ref{adjoint}):
   \begin{equation}\label{pressure}
\Delta \pi(t) = \nabla\cdot\left[ D\mathbf{\varphii}(t)\overline{\mathbf{y}}(t)+\widetilde{\mathbf{G}}(t)+\overline{\theta}(t)\nabla\psi(t) +  \overline{c}(t)\nabla\zeta(t) \right] \ \ \text{in}\ \Omega, \  t\in(0,T) \ \text{a.e.}
   \end{equation}

   Regarding the right hand side of (\ref{pressure}) like a $H^{-1}$ term, we can apply the main result in~\cite{I-P1} and deduce that
    \begin{eqnarray}\label{25}
     K(\varphii, \psi,\zeta)&\leq&C\left(s^3\lambda^4\int\!\!\!\int_{\mathcal{O}\times(0, T)} e^{-2s\alpha}\xi^3| (\varphii|^2+| \psi|^2 +| \zeta|^2)\nonumber\right.
     \\ \noalign{\smallskip}
     & &+s\int\!\!\!\int_Q e^{-2s\alpha }\xi\left|\widetilde{\mathbf{G}}\right|^2+\int\!\!\!\int_{Q} e^{-2s\alpha }(|\widetilde{g}_1|^2+|\widetilde{g}_2|^2)\nonumber
     \\ \noalign{\smallskip}
     & &+\left.\int\!\!\!\int_{\mathcal{O}_1\times(0,T)} |\hat{\mu}|^2\left|\nabla\pi\right|^2\right)
   \end{eqnarray}
for all $s\geq s_0(T^7 + T^8)$ and all
        \begin{eqnarray*}
\lambda &\geq &\hat{\lambda}\bigl(1+||\overline{\mathbf{y}}||_{\infty} + ||\overline{\theta}||_{\infty}+||\overline{c}||_{\infty}+||\mathbf{\overline{G}}||^{1/2}_{\infty}
+ ||\mathbf{\overline{L}}||^{1/2}_{\infty}\\
 & &+ ||\overline{g}_1||^{1/2}_{\infty}  + ||\overline{g}_2||^{1/2}_{\infty} + ||\overline{l}_1||^{1/2}_{\infty}
+ ||\overline{l}_2||^{1/2}_{\infty} \bigr).
   \end{eqnarray*}

   Taking into account the motion equation in (\ref{adjoint}), we have:
   \begin{eqnarray}\label{26}
& & \int\!\!\!\int_{\mathcal{O}_1\times(0,T)}|\hat{\mu}|^2\left|\nabla\pi\right|^2 \leq C\left(\int\!\!\!\int_{\mathcal{O}_1\times(0,T)} |\hat{\mu}|^2\left|\mathbf{\widetilde{G}}\right|^2\right.\nonumber
      \\ \noalign{\smallskip}
& &  \ \qquad + \int\!\!\!\int_{\mathcal{O}_1\times(0,T)}|\hat{\mu}|^2\left|\varphii_t\right|^2 + \left\|\overline{\mathbf{y}}\right\|^2_{\infty}\int\!\!\!\int_{\mathcal{O}_1\times(0,T)}|\hat{\mu}|^2\left|\nabla\varphii\right|^2
      \\ \noalign{\smallskip}
& & \ \qquad + \left\|\overline{\theta}\right\|^2_{\infty}\int\!\!\!\int_{\mathcal{O}_1\times(0,T)}|\hat{\mu}|^2\left|\nabla\psi\right|^2 + \left\|\overline{c}\right\|^2_{\infty}\int\!\!\!\int_{\mathcal{O}_1\times(0,T)}|\hat{\mu}|^2\left|\nabla \zeta \right|^2\nonumber
      \\ \noalign{\smallskip}
& &\ \qquad + \left.\int\!\!\!\int_{\mathcal{O}_1\times(0,T)}|\hat{\mu}|^2\left|\Delta\varphii\right|^2\right).\nonumber
   \end{eqnarray}
for all $s\geq s_0(T^7 + T^8)$ and all
   \begin{eqnarray*}
\lambda &\geq &\hat{\lambda}\bigl(1+||\overline{\mathbf{y}}||_{\infty} + ||\overline{\theta}||_{\infty}+||\overline{c}||_{\infty}+||\mathbf{\overline{G}}||^{1/2}_{\infty}
+ ||\mathbf{\overline{L}}||^{1/2}_{\infty}\\
 & &+ ||\overline{g}_1||^{1/2}_{\infty}  + ||\overline{g}_2||^{1/2}_{\infty} + ||\overline{l}_1||^{1/2}_{\infty}
+ ||\overline{l}_2||^{1/2}_{\infty} \bigr).
   \end{eqnarray*}

\

\noindent
$\bullet$ {\sc Estimates of the local terms on $\Delta \varphii$ and $\varphii_t$}

\

   The remainder of the proof is devoted to estimate the local terms on $\Delta\varphii$ and $\varphii_t$.
   To do this, we can follow the ideas in~\cite{Guerrero}, which gives

\begin{itemize}

\item [a)] An estimate of $|\Delta\varphii|^2$:
\end{itemize}
   \begin{eqnarray}\label{36}
     \int\!\!\!\int_{\mathcal{O}_1\times(0,T)}|\hat{\mu}|^2|\Delta\varphii|^2&\leq& C(1\!+\! T)\!\left( \displaystyle
     \int\!\!\!\int_{\mathcal{O}_2\times(0,T)}\!\!|\hat{\mu}|^2(|D\varphii\overline{\mathbf{y}}|^2
     \!+\! |\overline{\theta}\nabla\psi|^2 \!+\! |\overline{c}\nabla\zeta|^2 )\right.\nonumber
     \\
     &&\left.+\int\!\!\!\int_{\mathcal{O}_2\times(0,T)}(|\hat{\mu}'\varphii|^2+ |\hat{\mu}\varphii|^2 + |\hat{\mu}\mathbf{\widetilde{G}}|^2) \right);
        \end{eqnarray}
\begin{itemize}

\item  [b)] An estimate of $|\varphii_t|^2$:
\end{itemize}
   \begin{eqnarray}\label{601}
  \int\!\!\!\int_{\mathcal{O}_1\times(0,T)}|\hat\mu|^2|\varphii_t|^2
&\leq&C_\varepsilon \lambda^{24}(1+T)\left( \left\|\mu \mathbf{\widetilde{G}}\right\|^2_{\mathbf{L}^2(Q)}+ \| \mu \widetilde{g}_1 \|^2_{L^2(Q)}+ \| \mu \widetilde{g}_2 \|^2_{L^2(Q)}\right.\nonumber\\
 && +\|\mu \varphii\|^2_{L^2(0,T;\mathbf{L}^2(\mathcal{O}_3))}+|\mu'\varphii|^2_{L^2(0,T;\mathbf{L}^2(\mathcal{O}_3))} \nonumber \\
& & \left. + \|\mu\nabla\varphii\|^2_{L^2(0,T;\mathbf{L}^2(\mathcal{O}_3))}+\|\mu\nabla \psi\|^2_{L^2(0,T;\mathbf{L}^2(\mathcal{O}_3))}\right.\\
& &\left.+\|\mu\nabla \zeta\|^2_{L^2(0,T;\mathbf{L}^2(\mathcal{O}_3))}\right)+\varepsilon K(\varphii,\psi,\zeta)\nonumber,
   \end{eqnarray}
with $\mathcal{O}_1\subset\subset\mathcal{O}_2\subset\subset\mathcal{O}_3\subset\subset\mathcal{O}$.

   Combining (\ref{estimate1}) and (\ref{25})--(\ref{601}), after several additional computations, we find \eqref{10}.

   This ends the proof.


\medskip
\medskip
\end{document}